\documentclass{article}
\usepackage{amsfonts,amssymb,amscd, amsmath}
\usepackage{latexsym, graphicx}
\usepackage[affil-it]{authblk}
\usepackage{algorithmic}
\usepackage{algorithm}
\usepackage{tikz}

\usepackage{enumerate}

\def\qed{\hfill $\Box$\vspace{2ex}}

\newtheorem{theorem} {Theorem}[section]
\newtheorem{lemma}[theorem]{Lemma}
\newtheorem{observation}[theorem]{Observation}
\newtheorem{corollary}[theorem]{Corollary}
\newtheorem{problem}[theorem]{Problem}

\usepackage{xcolor}

\parskip 5pt
\newenvironment{proof}{\noindent {\it Proof:~}}{\hfill $\Box$\smallskip\par}

\def\inst#1{$^{#1}$}
\begin{document}

\title{On the structure of (banner, odd hole)-free graphs\thanks{Research
support by Natural Sciences and Engineering Research Council of
Canada.}}
\author{Ch\'inh T. Ho\`ang\inst{1}
           }
\date{}
\maketitle
\begin{center}
{\footnotesize

\inst{1} Department of Physics and Computer Science, Wilfrid
Laurier University,
Waterloo, Ontario,  Canada, N2L 3C5\\
\texttt{choang@wlu.ca} }

\end{center}

\begin{abstract}
A hole is a chordless  cycle with at least four vertices. A hole
is odd if it has an odd number of vertices. A banner is a graph
which consists of a hole on four vertices and a single vertex with
precisely one neighbor on the hole. We prove that a (banner, odd
hole)-free graph is perfect, or does not contain a stable set on
three vertices, or contains a homogeneous set. Using this
structure result, we design a polynomial-time algorithm for
recognizing (banner, odd hole)-free graphs. We also design
polynomial-time algorithms to find, for such a graph, a minimum
coloring and largest stable set. A graph $G$ is perfectly
divisible if every induced subgraph $H$ of $G$ contains a set $X$
of vertices such that $X$ meets all largest cliques of $H$, and
$X$ induces a perfect graph. The chromatic number of a perfectly
divisible graph $G$ is bounded by $\omega^2$ where $\omega$
denotes the number of vertices in a largest clique of $G$. We
prove that (banner, odd hole)-free graphs are perfectly divisible.
\end{abstract}

\section{Introduction}\label{sec:introduction}
A {\em hole} is a chordless cycle with at least four vertices.  An
{\it antihole} is the complement of a hole. A hole is {\it odd} if
it has an odd number of vertices. A graph is odd-hole-free if it
does not contain, as an induced subgraph, an odd hole.
Odd-hole-free graphs are studied in connection with perfect graphs
(definitions not given here will be given later.) It follows from
a result of  Kr\'al,  Kratochv\'il, Tuza and  Woeginger
(\cite{KraKra2001}) that it is NP-hard to color an odd-hole-free
graph. In contrast, there is a polynomial-time algorithm
(Gr\"otschel, Lov\'asz and Schrijver \cite{GroLov1984}) to find a
minimum coloring of a graph with no odd holes and no odd
antiholes.

Chudnovsky,  Cornu\'ejols,  Liu, Seymour, and  Vu\v{s}kovi\'c
(\cite{ChuCor2005}) designed a polynomial-time algorithm for
finding an odd hole or odd antihole, if one exists, in a graph.
However, the complexity of recognizing odd-hole-free graphs is
unknown. Conforti, Cornu\'ejols,  Kapoor, and Vu\v{s}kovi\'c
(\cite{ConCor2004}) found a decomposition of odd-hole-free graphs
by ``double star-cutsets'' and ``2-joins'' into certain ``basic
graphs'', but this decomposition does not seem to help in
designing a polynomial-time recognition algorithm for
odd-hole-free graphs.

A {\it banner} is the graph which consists of a hole on four
vertices and a single vertex with precisely one neighbor on the
hole. Banner-free graphs generalize the well studied class of
claw-free graphs. In this paper, we study (banner, odd hole)-free
graphs. We will show that if a (banner, odd hole)-free graph is
not perfect, then either it contains a homogeneous set or its
stability number is at most two. We will use this structural
result to design polynomial-time algorithms for (i) recognizing a
(banner, odd hole)-free graph, (ii) finding an optimal coloring of
a (banner, odd hole)-free graph, and (iii) finding a largest
stable set of a (banner, odd hole)-free graphs. In contrast,
results in the literature show that, for a (banner, odd hole)-free
graph, finding a largest clique and finding a minimum cover by
cliques are both NP-hard problems. We will discuss these facts in
Section~\ref{sec:optimization}. We note the algorithm of Gerber,
Hertz, and  Lozin (\cite{GerHer2004}) that finds in polynomial
time the largest stable set of a (banner, $P_8$)-free graph. Also,
the result of Kr\'al et al. (\cite{KraKra2001}) shows that it is
NP-hard to find a minimum coloring of an odd-hole-free graph.

The notion of ``robust'' algorithms was introduced by Raghavan
and Spinrad (\cite{RagSpi2003}).  A {\it robust} algorithm for
problem $P$ on domain $C$ must solve $P$ correctly for every input
in $C$. For input not in $C$, the algorithm may produce correct
output for problem $P$, or answer that the input is not in $C$.
Our optimization algorithms are robust.

A graph $G$ with at least one edge  is {\it $k$-divisible}  if the
vertex-set of each of its induced subgraphs $H$ with at least one
edge can be partitioned into $k$ sets, none of which contains a
largest clique of $H$. It is easy to see that the chromatic number
of a $k$-divisible graph is at most $k^{\omega -1}$. It was conjectured
by Ho\`ang and McDiarmid (\cite{HoaMcd2002}, \cite{HoaMcd1999})
that every odd-hole-free graph is 2-divisible. They proved the
conjecture for claw-free graphs (\cite{HoaMcd2002}). We will prove
the conjecture for banner-free graphs.

A graph $G$ is {\it perfectly divisible} if every induced subgraph
$H$ of $G$ contains a set $X$ of vertices such that $X$ meets all
largest cliques of $H$, and $X$ induces a perfect graph. The
chromatic number of a perfectly divisible graph $G$ is bounded by
$\omega^2$ where $\omega$ denotes the number of vertices in a
largest clique of $G$. We will prove that (banner, odd hole)-free
graphs are perfectly divisible.

In Section~\ref{sec:background}, we give the definitions used in
this paper and discuss background results that are used by our
algorithms. In Section~\ref{sec:structure},  we prove our theorems
on the structure of (banner, odd hole)-free graphs.
In Section~\ref{sec:recognition}, we give two polynomial-time
algorithms for recognizing a (banner, odd hole)-free graph.  In
Section~\ref{sec:optimization}, we give polynomial-time algorithms
for finding a minimum coloring and a largest stable set of a
(banner, odd hole)-free graph.
In Section~\ref{sec:divisibility}, we prove that (banner, odd
hole)-free graphs are 2-divisible. Finally, in
Section~\ref{sec:perfect-divisibility}, we prove that (banner, odd
hole)-free graphs are perfectly divisible.

\section{Definitions and background}\label{sec:background}
\subsection{Definitions}
The {\it claw} and {\it banner} are represented in Figure~\ref{fig:claw-banner}.
Let $K_t$ denote the clique on $t$ vertices. Let $C_k$
(respectively, $P_k$) denote the cordless cycle (respectively,
path) on $k$ vertices. The {\it girth} of a graph is the length of
its shortest cycle. A {\it hole} is the graph $C_k$ with $k \geq
4$. A hole is {\it odd} if it has an odd number of vertices.  An
{\it antihole} is the complement of a hole. A clique on three
vertices is called a {\it triangle}. For a given graph $H$, it is
customary to let {\it co}-$H$ denote the complement of $H$. Thus,
a co-triangle is the complement of the triangle. Let $L$ be a collection 
of graphs. A graph $G$ is $L$-free if $G$ does not contain
an induced subgraph isomorphic to a graph in $L$. In particular,
a graph is {\it (banner, odd hole)-free} if it does not contain an
induced subgraph isomorphic to a banner or an odd hole.
Let $G_1, G_2$ be two vertex-disjoint graphs. The union of $G_1$
and $G_2$ is denoted by $G_1 + G_2$. If $k$ is an integer, then $k
G_1$ denotes the union of $k$ disjoint copies of $G_1$. For a
graph $G$ and a set $X \subseteq V(G)$, $G[X]$ denotes the
subgraph of $G$ induced by $X$.

\begin{figure}
\newcommand{\ang}{17}
\newcommand{\sep}{15}
\begin{tikzpicture}[scale=2]
\tikzstyle{every line}=[thick];
\tikzstyle{every node}=[draw,shape=circle,fill = black, minimum size=0.05cm];

 \draw (2,0) node[label=below: (a) claw](firstNode){}
          -- (2,0.5) node (middleNode){};
 \draw (middleNode) -- (2,1) node{};
  \draw (middleNode) -- (2.5,0.5) node{};

  \draw (4,0) node[label=below: (b) banner](lower){}
          -- (4,0.5) node (middle){};
 \draw (middle) -- (4,1) node(upper){};
  \draw (middle) -- (4.7,0.5) node(middleRight){};
  \draw (upper) -- (4.7,1) node(topRight){};
  \draw (middleRight) -- (topRight);

 \end{tikzpicture}
\caption{The claw and the banner}\label{fig:claw-banner}

\end{figure}
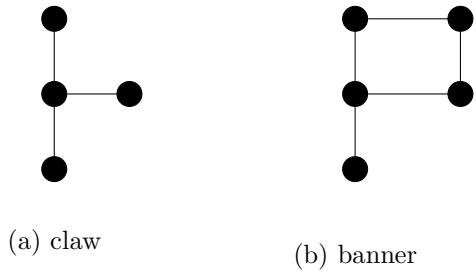

Let $G$ be a graph. Then $\chi(G)$ denotes the chromatic number of
$G$, and $\omega(G)$ denotes, the {\it clique number} of $G$, that
is, the number of vertices in a largest clique of $G$. Let $v$ be a vertex and $X$ be a set of vertices of $V(G)$. We say that $v$ is {\it X-complete} if $v$ is adjacent to all vertices 
of $X$, and $v$ is {\it X-anticomplete} if $v$ is non-adjacent to all vertices of $X$.
A set $H \subset V(G)$
of a graph $G$ is {\it homogeneous} if $2 \leq |H| < |V(G)| $ and
every vertex in $G - H$ is either $H$-complete or $H$-anticomplete. A graph is {\it prime} if it
contains no homogeneous set. In algorithm analysis, it is
customary to let $n$, respectively, $m$, denote the number of
vertices, respectively, edges, of the input graph.

A graph $G$ is {\it perfect} if $\chi(H) = \omega(H)$ for every
induced subgraph $H$ of $G$. A graph is {\it Berge} if it does not
contain as induced subgraph an odd hole or odd antihole. Two important results are known about
perfect graphs.  The Perfect Graph Theorem, proved by Lov\'asz
(\cite{Lov1972}), states that a graph is perfect if and only if
its complement is. The Strong Perfect Graph Theorem, proved by
Chudnovsky,  Robertson, Seymour, and  Thomas (\cite{ChuRob2006}),
states that a graph is perfect if and only if it is Berge. Both of the above results were long
standing open problems proposed by Berge (\cite{Ber1961}).
Chudnovsky et al. (\cite{ChuCor2005}) designed  an $O(n^9)$
algorithm for recognizing Berge graphs. Gr\"otschel et al.
(\cite{GroLov1984}) designed a polynomial-time and robust
algorithm for finding a largest clique and a minimum coloring of a
perfect graph. Corollary 1 in Kr\'al et al. (\cite{KraKra2001})
shows that it is NP-hard to compute the chromatic number of a
($2K_2, K_2 + 2K_1, 4K_1, C_5$)-free graph. It was pointed out to
the author by K. Cameron that this result implies that it is
NP-hard to color an odd-hole-free graph.

\subsection{Modular Decomposition}\label{sec:modular-decomposition}
Our recognition algorithms use the  well-studied modular
decomposition which we now discuss. Let $G$ be a graph. A {\it
module} is a non-empty set $M$ of vertices such that every vertex in $G-M$
is either $M$-complete or $M$-anticomplete.
Trivially, $\{x\}$ for any $x \in V(G)$, and $V(G)$ are modules. A
module $M$ is {\it non-trivial} if $2 \leq |M| < |V(G)|$, that is,
a non-trivial module is a homogeneous set. (Here, we are bound to
use two different names for essentially the same structure. In the
study of graph coloring, ``homogeneous set'' is often used,
whereas in graph algorithm design, ``module'' is used.)

Two sets $X$ and $Y$ {\it overlap} if $X-Y,Y-X$, and $Y \cap X$ are all non-empty.
$X \Delta Y$ denotes the symmetric difference of $X$ and $Y$, that is, $(X - Y)
\cup (Y - X)$. It is easy to see that if $X$ and $Y$ are overlapping modules then
the following sets are also modules: $X-Y, Y - X, X \cup Y, X \cap Y$, and $X \Delta Y$.
A module is {\it strong} if it does not overlap another module.  A non-trivial module $M$
of  a graph $Q$ is {\it maximal} if
there does not exist module $M'$ of $Q$ such that $M \subset M'
\subset Q$. If both $G$ and $\overline{G}$ are connected, then the maximal modules are strong.

Modular decomposition refers to the process of partitioning the vertices
of a graph $G$ into its strong modules ${\cal P} = (M_1, M_2, \ldots, M_t)$. 
Each $M_i$, if it is not prime,
is recursively decomposed.
The procedure stops when every module has a single vertex.
To a modular partition ${\cal P}$, we associate a {\it quotient} graph $G_{ {\cal P} }$ whose
vertices are the modules defined in ${\cal P}$;
two vertices $v_i$ and $v_j$ of $G_{ {\cal P} }$ are adjacent if and only if
the corresponding modules $M_i$ and $M_j$ are adjacent in $G$.
For the graph in Figure~\ref{fig:decomp}, the modular decomposition is
${\cal P} = \{ \{1\}, \{2,3\}, \{4,5\}, \{6,7,8,9,10\} \}$. The quotient graph $G_{ {\cal P} }$ is the $P_4$.
For each module $M$ in
${\cal P}$, if $M$ is not prime, then $M$ is recursively decomposed into strong modules.
For example,
the module $\{6,7,8,9,10\}$ is decomposed into $\{ \{6\}, \{7\}, \{8\}, \{ 9, 10 \} \}$.
The result of the modular decomposition can be represented as
a tree.

A module which induces a disconnected subgraph in the graph is a
{\em parallel module}. A module which induces a disconnected
subgraph in the complement of the graph is a {\em series module}.
A module which induces a connected subgraph in the graph as well
as in the complement of the graph is a {\em neighborhood module}.

If the current set $Q$ of vertices induces a disconnected
subgraph, $Q$ is decomposed into its components. A node labeled
$P$ (for parallel) is introduced, each component of $Q$ is
decomposed recursively, and the roots of the resulting subtrees
are made children of the $P$ node. If the complement of the
subgraph induced by current set $Q$ is disconnected, $Q$ is
decomposed into the components of the complement. A node labeled
$S$ (for series) is introduced, each component of the complement
of $Q$ is decomposed recursively, and the roots of the resulting
subtrees are made children of the $S$ node. Finally, if the
subgraph induced by the current set $Q$ of vertices and its
complement are connected, then $Q$ is decomposed into its maximal modules; it is known (Gallai \cite{Gal1967}) that in
this case, each vertex of $Q$ belongs to a unique maximal 
module of $Q$. A node labeled $N$ (for neighborhood) is
introduced, each maximal  module of $Q$ is decomposed
recursively, and the roots of the resulting subtrees are made
children of the $N$ node. A graph and its modular decomposition
tree are shown in Figure~\ref{fig:decomp}.
\setlength{\unitlength}{1.2pt}
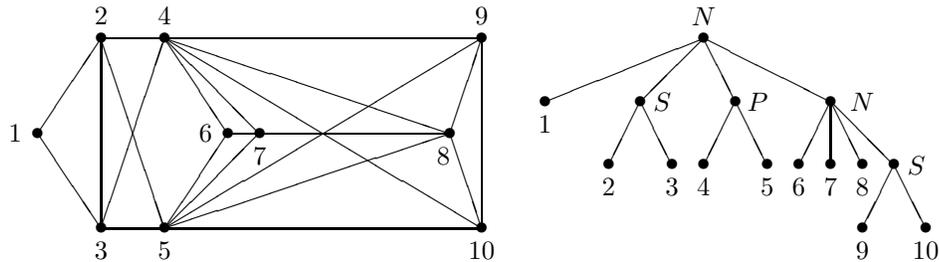
\begin{figure}[ht]
\center \mbox{
\begin{picture}(300, 90)
%
%
\put(10,40){\makebox(0,0){$\bullet$}}
\put(3,40) {\makebox(0,0){$1$}}
%
\put(30,70){\makebox(0,0){$\bullet$}}
\put(30,77) {\makebox(0,0){$2$}}
%
\put(30,10){\makebox(0,0){$\bullet$}}
\put(30,3) {\makebox(0,0){$3$}}
%
\put(50,70){\makebox(0,0){$\bullet$}}
\put(50,77) {\makebox(0,0){$4$}}
%
\put(50,10){\makebox(0,0){$\bullet$}}
\put(50,3) {\makebox(0,0){$5$}}
%
\put(70,40){\makebox(0,0){$\bullet$}}
\put(63,40) {\makebox(0,0){$6$}}
%
\put(80,40){\makebox(0,0){$\bullet$}}
\put(80,33) {\makebox(0,0){$7$}}
%
\put(140,40){\makebox(0,0){$\bullet$}}
\put(138,33) {\makebox(0,0){$8$}}
%
\put(150,70){\makebox(0,0){$\bullet$}}
\put(150,77) {\makebox(0,0){$9$}}
%
\put(150,10){\makebox(0,0){$\bullet$}}
\put(150,3) {\makebox(0,0){$10$}}
%
 \put(10,40) {\line(2,3){20}} 
 \put(10,40) {\line(2,-3){20}} 
 \put(30,70) {\line(0,-1){60}} 
 \put(30,70) {\line(1,0){20}} 
 \put(30,70) {\line(1,-3){20}} 
 \put(30,10) {\line(1,3){20}} 
 \put(30,10) {\line(1,0){20}} 
 \put(50,70) {\line(2,-3){20}} 
 \put(50,70) {\line(1,-1){30}} 
 \put(50,70) {\line(3,-1){90}} 
 \put(50,70) {\line(1,0){100}} 
 \put(50,70) {\line(5,-3){100}} 
 \put(50,10) {\line(2,3){20}} 
 \put(50,10) {\line(1,1){30}} 
 \put(50,10) {\line(3,1){90}} 
 \put(50,10) {\line(5,3){100}} 
 \put(50,10) {\line(1,0){100}} 
 \put(70,40) {\line(1,0){10}} 
 \put(80,40) {\line(1,0){60}} 
 \put(140,40) {\line(1,3){10}} 
 \put(140,40) {\line(1,-3){10}} 
 \put(150,70) {\line(0,-1){60}} 
%
%
%
\put(220,70){\makebox(0,0){$\bullet$}}
\put(220,77) {\makebox(0,0){$N$}}
%
\put(170,50){\makebox(0,0){$\bullet$}}
\put(170,43) {\makebox(0,0){$1$}}
%
\put(200,50){\makebox(0,0){$\bullet$}}
\put(207,50) {\makebox(0,0){$S$}}
%
\put(230,50){\makebox(0,0){$\bullet$}}
\put(237,50) {\makebox(0,0){$P$}}
%
\put(260,50){\makebox(0,0){$\bullet$}}
\put(270,50) {\makebox(0,0){$N$}}
%
\put(190,30){\makebox(0,0){$\bullet$}}
\put(190,23) {\makebox(0,0){$2$}}
%
\put(210,30){\makebox(0,0){$\bullet$}}
\put(210,23) {\makebox(0,0){$3$}}
%
\put(220,30){\makebox(0,0){$\bullet$}}
\put(220,23) {\makebox(0,0){$4$}}
%
\put(240,30){\makebox(0,0){$\bullet$}}
\put(240,23) {\makebox(0,0){$5$}}
%
\put(250,30){\makebox(0,0){$\bullet$}}
\put(250,23) {\makebox(0,0){$6$}}
%
\put(260,30){\makebox(0,0){$\bullet$}}
\put(260,23) {\makebox(0,0){$7$}}
%
\put(270,30){\makebox(0,0){$\bullet$}}
\put(270,23) {\makebox(0,0){$8$}}
%
\put(280,30){\makebox(0,0){$\bullet$}}
\put(287,30) {\makebox(0,0){$S$}}
%
\put(270,10){\makebox(0,0){$\bullet$}}
\put(270,3) {\makebox(0,0){$9$}}
%
\put(290,10){\makebox(0,0){$\bullet$}}
\put(290,3) {\makebox(0,0){$10$}}
%
 \put(220,70) {\line(-5,-2){50}} 
 \put(220,70) {\line(-1,-1){20}} 
 \put(220,70) {\line(1,-2){10}} 
 \put(220,70) {\line(2,-1){40}} 
 \put(200,50) {\line(-1,-2){10}} 
 \put(200,50) {\line(1,-2){10}} 
 \put(230,50) {\line(-1,-2){10}} 
 \put(230,50) {\line(1,-2){10}} 
 \put(260,50) {\line(-1,-2){10}} 
 \put(260,50) {\line(0,-1){20}} 
 \put(260,50) {\line(1,-2){10}} 
 \put(260,50) {\line(1,-1){20}} 
 \put(280,30) {\line(-1,-2){10}} 
 \put(280,30) {\line(1,-2){10}} 
\end{picture}
}   
\caption{A graph and its modular decomposition tree} \label{fig:decomp}
\end{figure}
\begin{theorem} {\rm (McConnell and Spinrad \cite{MccSpi1999})} \label{thm:decomp}
The modular decomposition tree of a graph is unique and it can be
constructed in O($m+n$) time. \qed
\end{theorem}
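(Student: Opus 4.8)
The plan is to prove the two assertions—uniqueness of the decomposition tree and the $O(m+n)$ running time—by separate arguments, since they rest on quite different ideas.

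For uniqueness, I would first show that the \emph{strong} modules form a laminar family: any two strong modules are either disjoint or nested. This is immediate from the definitions, because a strong module does not overlap any other module, and overlapping is precisely the failure of the disjoint-or-nested dichotomy. A laminar family ordered by inclusion is a tree, with $V(G)$ at the root and the singletons as leaves; its internal nodes are exactly the non-trivial strong modules. Each internal node $Q$ is then labeled $P$, $S$, or $N$ according to whether $G[Q]$ is disconnected, its complement is disconnected, or both are connected. At $P$- and $S$-nodes the children (the components of $G[Q]$, respectively of its complement) are canonically determined, and at an $N$-node the children—the maximal proper submodules of $Q$—are uniquely determined by Gallai's theorem (\cite{Gal1967}). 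Since every ingredient of the construction is fixed by the graph, the tree is unique.

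For the linear-time bound the obstacle is genuine, and I would organize the algorithm in two phases. The first phase computes a \emph{factorizing permutation}: an ordering of $V(G)$ in which every strong module occupies a contiguous block. This is the crux. One obtains such an ordering by \emph{partition refinement}—starting from the trivial partition and repeatedly splitting classes by the neighborhood of a pivot vertex—but the naive version costs $O(n + m\log n)$, and driving this down to $O(n+m)$ requires a careful amortized charging so that each edge is handled only a bounded number of times. The second phase reads the factorizing permutation together with the side information recorded during refinement, locates the left and right endpoints of each strong module, and assembles the nested intervals into the tree by a single stack-based scan, again in linear time.

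The main difficulty lies entirely in the first phase: extracting a factorizing permutation in true linear time, and recording just enough auxiliary data that the second phase can recover exact module boundaries without a costly recomputation. Removing the last logarithmic factor is delicate—McConnell and Spinrad achieve it through a refined analysis of the refinement process combined with a dedicated treatment of the long ``spines'' of the tree—so I would isolate this as a self-contained lemma and invoke their analysis rather than reprove it from scratch.
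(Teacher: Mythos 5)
This theorem is quoted in the paper as a known external result of McConnell and Spinrad \cite{MccSpi1999}; the paper supplies no proof of it (the \qed follows the statement immediately), so the comparison here is against a citation, not an internal argument. Your uniqueness half is sound and is the standard argument: strong modules are pairwise non-overlapping, hence form a laminar family; a laminar family ordered by inclusion is a tree with $V(G)$ at the root and singletons at the leaves; and the children of a node are canonically determined in each of the three cases --- components at a $P$-node, co-components at an $S$-node, and, by Gallai's theorem \cite{Gal1967}, the maximal proper submodules at an $N$-node. The running-time half, however, is not a proof: at the decisive step you ``invoke their analysis,'' i.e., you cite the very result being proved, which is exactly what the paper itself does, so nothing is added beyond the citation. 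There is also a mismatch worth flagging: the pipeline you sketch --- compute a factorizing permutation by partition refinement, then recover module boundaries with a stack-based scan --- is the approach of the later permutation-based algorithms (Capelle--Habib, Habib--de~Montgolfier--Paul), not the algorithm of McConnell and Spinrad, which is organized quite differently; consequently their amortized analysis cannot simply be transplanted as a self-contained lemma inside your pipeline. Either carry out the linear-time amortization for the permutation-based method (and cite its actual authors), or do what the paper does and rest the whole theorem on \cite{MccSpi1999}; the hybrid as written proves neither.
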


To analyze our algorithms, it will be more convenient to consider
the modular decomposition as a ``binary'' decomposition. Let $H$
be a homogeneous set of a graph $G$. The decomposition produces
two graphs: $H$ and $G[ (V(G)-H) \cup \{h\}]$ for a vertex $h \in
H$. Note that $G[(V(G) - H) \cup \{h_1\}]$ is isomorphic to
$G[(V(G) - H) \cup \{h_2\}]$ for any two vertices $h_1, h_2$ in
$H$. If either of the graphs $H$ and $G[ (V(G)-H) \cup \{h\}]$ is
not prime, then it is recursively decomposed. This process is
summarized by Algorithm~\ref{al:modular-decomposition}.
\begin{algorithm}
\caption{HOMOGENEOUS-SET-DECOMPOSITION}\label{al:modular-decomposition}
\begin{algorithmic}
 \STATE{\textbf{input:}} graph $G$
 \STATE{\textbf{output:}} a set $L$ containing prime induced subgraphs of $G$ produced by modular decomposition.
 \STATE{\textbf{initialization:}} $L \leftarrow \emptyset$, $W  \leftarrow \{G\}$ ($W$ contains the graphs to be decomposed)
 \STATE
 \IF{ $W = \emptyset$}
    \STATE {\bf stop}
 \ENDIF

  \REPEAT
      \STATE take one graph $F$ from the set $W$ and remove $F$ from $W$
       \IF { $F$ is prime}
          \STATE add $F$ to $L$
     \ELSE
         \STATE find a homogeneous set $H$ of $F$
         \STATE  add to $W$ the two graphs: $H$ and $  F[ (V(F) - H \cup \{h\} \}$ for an arbitrary vertex $h \in H$
    \ENDIF
   \UNTIL{ $W$ is empty}
\end{algorithmic}
\end{algorithm}
Note that the homogeneous sets considered by the algorithm  are
not necessarily maximal. We will reproduce an argument used by
Ho\`ang and Reed (\cite{HoaRee1989}) to show that the number of
prime graphs produced by Algorithm~\ref{al:modular-decomposition}
is at most $n -1$ for an input graph with at least two vertices.
Let $p(G)$ be the number of prime graphs produced by
Algorithm~\ref{al:modular-decomposition} on a graph $G$ with $|G|$
vertices. We are going to prove, for a graph $G$ with at least two
vertices,  $p(G) \leq  |G| -1$ by induction on the number of
vertices of $G$. If $G$ is prime, then $p(G) = 1$. Let $H$ be a
homogeneous set of $G$ that is used by
Algorithm~\ref{al:modular-decomposition} in the decomposition
step. The algorithm will decompose $H$ and and $ L=  G[ (V(G) - H
\cup \{h\} \}$ for a vertex $h$ of $H$. By the induction
hypothesis, we have $p(G) = p(H) + p(F)  \leq  (|H| - 1 ) + ( |F|
-1 )  = (|H| +|F|) - 2$. Since $|H| + |F| = |G| + 1$, we have
$p(G) \leq |G| -1$.

We note there are more general methods to handle modular decompositions of graphs. In particular,  Cunningham \cite{Cun1982} studied a decomposition of directed graphs, and  Rao \cite{Rao2008} studied the split decomposition. Their methods are more general but also more complicated than the method on modular decomposition presented here. 
\section{The structure of (banner, odd hole)-free graphs}\label{sec:structure}
Chv\'atal and Sbihi (\cite{ChvSbi1998}) designed a polynomial-time
algorithm for recognizing claw-free perfect graphs. In the
process, they found a proof of the ``Ben Rebea's Lemma'' below.
\begin{lemma}\label{lem:benrebea}[Ben Rebea's Lemma \cite{ChvSbi1998} ]
Let $G$ be a connected claw-free graph with $\alpha(G) \geq 3$. If
$G$ contains an odd antihole, then $G$ contains a $C_5$.
\end{lemma}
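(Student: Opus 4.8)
The plan is to argue by contradiction: suppose $G$ is connected and claw-free with $\alpha(G)\ge 3$, contains an odd antihole, but contains no $C_5$. Among all odd antiholes of $G$ choose one, $A=\overline{C_n}$, with $n$ minimum; write its vertices as $0,1,\dots,n-1$ so that $i\sim j$ in $A$ exactly when $i-j\not\equiv\pm 1\pmod n$, i.e.\ the non-edges of $A$ are precisely the consecutive pairs of the underlying hole $C_n$. Since $C_5$ is self-complementary, $A=C_5$ would itself be a $C_5$; hence $n\ge 7$. The crucial feature to exploit is that every odd antihole satisfies $\alpha(A)=2$ (an independent set of $A$ is a clique of the hole, hence an edge), so the hypothesis $\alpha(G)\ge 3$ must be realized with the help of vertices lying outside $A$, and it is this tension between a stable triple and a large antihole in a claw-free, $C_5$-free graph that I expect to produce the contradiction.

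The first technical step is to classify, for an arbitrary $x\in V(G)\setminus A$, its non-neighbour set $M_x=\{i\in A: x\not\sim i\}$. The key observation is a \emph{forbidden-claw dichotomy}: if $M_x$ contains two hole-consecutive vertices $i,i+1$, then for any $v\in A$ with $v\sim x$ and $v\sim i$ and $v\sim i+1$ the vertices $x,i,i+1$ are pairwise non-adjacent (since $i\not\sim i+1$ in $A$), giving a claw centred at $v$; as $v\sim i,\ v\sim i+1$ holds for all $v\notin\{i-1,i,i+1,i+2\}$, avoiding this claw forces $N(x)\cap A\subseteq\{i-1,i+2\}$. Thus each external vertex is either \emph{nearly anticomplete} to $A$ (at most the two adjacent vertices $i-1,i+2$ as neighbours) or else $M_x$ contains no two consecutive vertices, i.e.\ $M_x$ is a clique of $A$ with $|M_x|\le (n-1)/2$. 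I would record the parallel fact that if a stable triple $\{a,b,c\}$ had a common neighbour $v\in A$, then $v$ together with $a,b,c$ is a claw; hence any stable triple satisfies $M_a\cup M_b\cup M_c=A$, so $|M_a|+|M_b|+|M_c|\ge n$.

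With these constraints in hand, I would pick a stable set $\{a,b,c\}$ (which exists as $\alpha(G)\ge 3$) and push the covering condition $M_a\cup M_b\cup M_c=A$ against the dichotomy. If one of them, say $a$, is nearly anticomplete, I would build an induced $C_5$ from $a$, its one or two attachment points, and a short segment of the hole, or else extract an odd antihole shorter than $A$, contradicting either $C_5$-freeness or the minimal choice of $n$. If instead all three non-neighbour sets are cliques of $A$ (independent sets of the hole) that together cover all $n\ge 7$ vertices, I would use the forced overlaps and gaps of these cliques along the hole to locate four hole-vertices $p,q,r,s$ together with one of $a,b,c$ inducing a $C_5$ of the shape $x\,p\,q\,r\,s$, again using that consecutive hole-vertices are the only non-edges of $A$.

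The main obstacle is exactly this last geometric case analysis: showing that three small cliques of $A$ covering a hole of length $\ge 7$ cannot do so while simultaneously preserving claw-freeness and $C_5$-freeness. I expect it to split into subcases according to the sizes of the cliques $M_a,M_b,M_c$ and the lengths of the gaps they leave, and the bookkeeping there is the delicate part of the argument. A useful auxiliary device I would keep in reserve is the complementary reformulation: in $\overline{G}$ the antihole becomes an odd hole $C_n$, claw-freeness becomes the absence of an induced $K_3\cup K_1$ (so every triangle of $\overline{G}$ is dominating), and $\alpha(G)\ge3$ becomes the presence of a triangle; a dominating triangle meeting a $C_7$ tends to create short chordless cycles, which gives an alternative route to the $C_5$. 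If a direct $C_5$ cannot be exhibited in some configuration, the fallback is always to produce a \emph{shorter} odd antihole and invoke the minimality of $n$, reducing eventually to $n=5$.
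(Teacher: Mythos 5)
First, a point of comparison: the paper does not prove this lemma at all. It is quoted from Chv\'atal and Sbihi \cite{ChvSbi1998} as a known result (and serves only as inspiration for Lemmas~\ref{lem:triangle-free-component} and~\ref{lem:co-triangle}). So there is no in-paper proof to measure your attempt against; your argument has to stand on its own, and as written it does not.

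Your preliminary observations are correct: the claw-dichotomy for an external vertex $x$ (either $M_x$ has no two hole-consecutive vertices, or $N(x)\cap A\subseteq\{i-1,i+2\}$) and the covering condition $M_a\cup M_b\cup M_c\supseteq A$ for a stable triple are both sound. But the proof stops exactly where Ben Rebea's Lemma begins: the ``geometric case analysis'' is announced, not carried out, and a declared fallback (``produce a shorter odd antihole'') is not a proof step. Worse, there is a concrete structural reason the sketch cannot close in its present form: nothing in it uses connectivity, yet the lemma is false without that hypothesis. The disjoint union $\overline{C_7} + 3K_1$ is claw-free, has $\alpha = 3$, contains an odd antihole, and contains no $C_5$. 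In this graph your machinery runs without friction and detects nothing: the stable triple is the three isolated vertices, the covering condition holds vacuously ($M_a=M_b=M_c=A$), and each of $a,b,c$ sits in your ``nearly anticomplete'' branch with \emph{zero} attachment points, so neither of your proposed constructions (a $C_5$ through the attachment points and a hole segment, or a shorter odd antihole) exists. Any correct proof must therefore invoke connectivity substantively --- for instance via a shortest path from the stable triple to $A$ and an analysis of how its interior vertices attach to $A$, which is precisely the kind of argument the paper runs in its own banner-free analogues (see the end of the proof of Lemma~\ref{lem:triangle-free-component}, where a shortest path in $\overline{G}$ from $A$ to $T$ is contradicted). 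Until that case and the deferred bookkeeping for three covering cliques are actually worked out, what you have is a plausible plan, not a proof.
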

It is this Lemma that inspires our two
Lemmas~\ref{lem:triangle-free-component} and~\ref{lem:co-triangle}
and Theorem~\ref{thm:main} below.
\begin{lemma}\label{lem:triangle-free-component}
Let $G$ be a (banner, $C_5$)-free graph containing an odd antihole $A$ such that no co-triangle of $G$ contains two vertices of $A$. Let $O$ be the component of  $\overline{G}$ that contains $A$. Then, 
\begin{description}
\item [(i)] $G[V(O)]$ is co-triangle-free, and 
\item [(ii)] if $\alpha(G) \geq 3$, then $V(O)$ is a homogeneous set of $G$.
\end{description}
\end{lemma}
\begin{proof}
Enumerate the vertices of $A$ as $v_1, v_2 , \ldots , v_k$ such that $v_i v_{i+1}$ is a non-edge
of $G$ with the subscripts taken module $k$, with $k \geq 7$. We will first establish the following claim.
%
\begin{subequations}\label{first:main}
\begin{equation}\tag{\ref{first:main}}
\begin{minipage}{0.9\linewidth}
No vertex in $A$ belongs to a co-triangle. 
\end{minipage}\end{equation}
Suppose some vertex $v_i$ of $A$ forms a co-triangle with some two vertices
$u_1, u_2$ in $G-A$. By the hypothesis of the Lemma, we may assume $v_{i+1}$ is adjacent to both $u_1, u_2$. Similarly,
\begin{equation}
\begin{minipage}{0.9\linewidth} \label{first:1}
$v_{i-1}$ is adjacent to both $u_1, u_2$.
\end{minipage}\end{equation}
We will show
\begin{equation}
 \begin{minipage}{0.9\linewidth}
\label{eq:non-adjacent-to-one}
Each of the vertices in $\{ v_{i-2}, v_{i+2} \}$ is non-adjacent to at least one vertex in $\{ u_1, u_2\}$.
\end{minipage}\end{equation}
If $v_{i+2}$ is adjacent to both $u_1, u_2$, then the graph
$G[\{v_i, v_{i+1}, v_{i+2}, u_1, u_2\}]$ induces a banner. So, $v_{i+2}$ is non-adjacent to at least one vertex
in the set $\{u_1, u_2\}$. By symmetry, (\ref{eq:non-adjacent-to-one}) holds. Next, we will show
\begin{equation}
\begin{minipage} {0.9\linewidth} \label{eq:non-adjacent-to-all}
Each of the vertices in $\{ v_{i-2}, v_{i+2} \}$ is non-adjacent to all vertices in $\{ u_1, u_2\}$.
\end{minipage}\end{equation}
Suppose $v_{i+2}$ is adjacent to $u_2$ and non-adjacent to $u_1$. We have the
following implications.
\begin{description}
  \item  $u_2 v_{i+3} \in E(G)$, for otherwise $G[\{v_i, v_{i+1}, v_{i+2}, v_{i+3}, u_2\}]$
induces a $C_5$,
  \item   $u_1 v_{i+3} \not\in E(G)$, for otherwise $G[\{v_{i}, v_{i+2}, v_{i+3}, u_2, u_1\}]$
induces a banner,
  \item  $u_1 v_{i-2} \in E(G)$, for otherwise $G[\{v_{i-2}, v_{i-1}, v_{i+2}, v_{i+3}, u_1\}]$
induces a banner,
  \item  $u_2 v_{i-2} \not\in  E(G)$ by (\ref{eq:non-adjacent-to-one}).
\end{description}

Now, $G[\{v_{i-2}, v_{i+2}, v_{i+3}, u_1, u_2\}]$ induces a banner.
We have established (\ref{eq:non-adjacent-to-all}).

It follows from (\ref{eq:non-adjacent-to-all}) that, for all $j$,  $v_{i+2j}$ is non-adjacent to
both $u_1, u_2$ with the subscripts taken modulo $k$.
Since $A$ is odd, this implies $v_{i-1}$ is non-adjacent to both
$u_1, u_2$, a contradiction to (\ref{first:1}). We have established (\ref{first:main}).
\end{subequations}

\begin{subequations}\label{eq:see-co-triangle}
Next, we will show
\begin{equation}\tag{\ref{eq:see-co-triangle}}
 \begin{minipage}{0.9\linewidth}

For any co-triangle $C$ of $G - A$, some $v_i \in A$ is $C$-complete. 
\end{minipage}\end{equation}
Suppose that (\ref{eq:see-co-triangle}) is false for some co-triangle $C$ of $G - A$ with vertices $t_1, t_2, t_3$.
By (\ref{first:main}), any vertex $v_i$ of $A$ is adjacent to exactly two vertices of $C$.
\begin{equation}
 \begin{minipage}{0.9\linewidth}
\label{eq:different-neighborhood}
For any subscript $i$, the vertices $v_i, v_{i+1}$ cannot have the same neighbourhood in $C$,
\end{minipage}\end{equation}
for otherwise
$v_i, v_{i+1}$ form a co-triangle with their unique non-neighbour in $C$, a contradiction to the hypothesis of the Lemma.

Consider the vertices $v_1, v_2, v_3$. Suppose all of them have
different neighbourhoods in $C$. We may assume without loss of
generality that $N(v_1) \cap C =\{t_1, t_2 \}, N(v_2) \cap C
=\{t_1, t_3 \}, N(v_3) \cap C =\{t_2, t_3 \}$. Now $G[\{v_1, t_1,
v_2, t_3, v_3\}]$ induces a $C_5$. Thus, the  vertices $v_1, v_2,
v_3$ cannot all have distinct neighborhoods in $C$. It follows from
(\ref{eq:different-neighborhood}) that, for any $i$, $v_i$ and
$v_{i+2}$ have the same neighbourhood in $C$. Since $A$ has an odd
number of vertices, all vertices in $A$ have the same
neighbourhood in $C$. But this is a contradiction to
(\ref{eq:different-neighborhood}). We have established
(\ref{eq:see-co-triangle}).
\end{subequations}

Let $T$ be the set of vertices belonging to a co-triangle of $G$.
By (\ref{first:main}), we have $A \cap T = \emptyset$.
Write $R = V(G) - (T \cup A )$. We will establish the following.
\begin{equation}
 \begin{minipage}{0.9\linewidth}
\label{eq:see-co-triangle2} Let $C$ be a co-triangle of $G$, and
let $v,x$ be two vertices in $G-T$. Suppose $v$ is adjacent to all
vertices of $C$. If $x$ is not adjacent to $v$,  then $x$ is
adjacent to all vertices of $C$.
\end{minipage}\end{equation}
Let the vertices of the co-triangle $C$ be $t_1, t_2, t_3$.
Suppose $x$ is non-adjacent to $v$ and $t_3$.
Since $x \not\in T$, it is adjacent to $t_1, t_2$.
But now  $G[\{v,x,t_1, t_2, t_3\}]$ induces a banner in $G$.
We have established (\ref{eq:see-co-triangle2}).

Note (\ref{eq:see-co-triangle}) and (\ref{eq:see-co-triangle2})
imply each vertex of $A$ is adjacent to all vertices of $T$.
Furthermore, every vertex in $G-T$ that has a non-neighbour in $A$
is adjacent to all vertices of $T$. Now, consider the complement
$\overline{G}$ of $G$ and the component $O$ of $\overline{G}$ that
contains all of $A$. We claim that $O$ contains no vertex of $T$,
in particular, $\overline{G}$ is disconnected  and $O$ is
triangle-free. Suppose in $\overline{G}$, there is a path with one
endpoint in $A$ and the other endpoint in $T$. Take such a
shortest path $P$ and enumerate the vertices of the path  as $p_1,
p_2, \ldots, p_j$ with $p_1 \in A, p_j \in T$ and $p_\ell \in V(G)
- (T \cup A)$ for $\ell = 2, \ldots, j -1$. By
(\ref{eq:see-co-triangle2}), $p_\ell$ is adjacent, in $G$, to all
of $T$ for $\ell = 2, \ldots, j -1$; but this contradict the
existence of the non-edge $p_{j-1} p_j$ (in $G$) of $P$.
\end{proof}
\begin{lemma}\label{lem:co-triangle}
Let $G$ be a (banner, $C_5$)-free graph with an odd antihole $A$.
Suppose some two vertices of $A$ belong to a co-triangle.  Then
there is a homogeneous set $H$ of $G$ such  that $H$ contains $A$
and is co-triangle-free.
\end{lemma}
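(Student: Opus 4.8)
The plan is to understand how an arbitrary vertex $x\notin A$ attaches to $A$, to prove that the ``bad'' vertices (those forming a co-triangle with two vertices of $A$) are exactly the vertices anticomplete to $A$, and then to build $H$ by adjoining to $A$ precisely those external vertices that keep the anticomplete vertices uniform. First I would enumerate $A$ as $v_1,\ldots,v_k$ with $v_iv_{i+1}\notin E(G)$ (indices mod $k$), as in Lemma~\ref{lem:triangle-free-component}, and record that $k\ge 7$ since $C_5$-freeness excludes $\overline{C_5}=C_5$; note also $\alpha(A)=2$, so the two vertices of $A$ in the hypothesized co-triangle are a consecutive pair, which I may relabel $v_1,v_2$, and the third vertex lies outside $A$. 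The central claim (call it Claim A) is: if $x\notin A$ is non-adjacent to two consecutive vertices $v_i,v_{i+1}$, then $x$ is anticomplete to $A$. Granting this, a vertex forms a co-triangle with two vertices of $A$ if and only if it is anticomplete to $A$; write $B$ for the set of such vertices, $C$ for the vertices complete to $A$, and $M$ for the remaining (``mixed'') vertices. By Claim A every vertex of $M$ has its non-neighbours in $A$ forming an independent set of the cycle $v_1\ldots v_k$ (no two consecutive).

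To prove Claim A I would use that, for every pair of cycle-edges $\{v_a,v_{a+1}\}$ and $\{v_b,v_{b+1}\}$ at cyclic distance at least two, the set $\{v_a,v_{a+1},v_b,v_{b+1}\}$ induces a $C_4$ in $A$ (this is where $k\ge 7$ is used). If $x$ were adjacent to exactly one vertex of such a $C_4$, then $x$ together with that $C_4$ would be a banner. Applying this with the anchor edge $\{v_1,v_2\}$, whose ends are both non-neighbours of $x$, forces $x$ to have the same adjacency to $v_b$ and to $v_{b+1}$ for every admissible $b$, so $x$ is either adjacent to all of $v_4,\ldots,v_{k-1}$ or to none of them. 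A further analysis then rules out the ``all adjacent'' alternative: using the banner in which an antihole vertex is the pendant and $x$ lies on the $C_4$, the $C_5$ obtained from four consecutive $v$'s (as in Lemma~\ref{lem:triangle-free-component}), and the oddness of $A$, and finally treating the two boundary indices $v_3,v_k$. I expect this propagation to be the main obstacle: it is the delicate part, closely parallel to the argument of Lemma~\ref{lem:triangle-free-component}, and getting the boundary indices and the parity right is where the care is needed.

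With Claim A in hand the construction of $H$ is routine banner-chasing, and I would establish three separation facts, each by exhibiting a banner whose pendant is a vertex $b\in B$ (recall $b$ is anticomplete to $A$). First, $B$ is anticomplete to $M$: if $b\sim m$ with $m\in M$, pick a non-neighbour $v_q$ of $m$ and a consecutive pair $v_a,v_{a+1}$ of neighbours of $m$ both adjacent to $v_q$; then $\{m,v_a,v_q,v_{a+1}\}$ induces a $C_4$ with pendant $b$. Second, if $c\in C$ has a neighbour $b\in B$ then $c$ is complete to $M$: if $c\not\sim m$, the set $\{c,v_a,m,v_{a+1}\}$ (with $v_a,v_{a+1}$ consecutive neighbours of $m$) induces a $C_4$ with pendant $b$ (using the first fact to see $b\not\sim m$). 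Third, writing $C_0=\{c\in C: c\text{ has no neighbour in }B\}$ and $C_1=C\setminus C_0$, the set $C_1$ is complete to $C_0$: if $c_0\not\sim c_1$ with $c_1\sim b$, then $\{c_0,v_1,c_1,v_2\}$ induces a $C_4$ with pendant $b$.

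Finally I set $H=A\cup M\cup C_0$. Every $b\in B$ is anticomplete to $A$, to $M$, and (by definition of $C_0$) to $C_0$, hence anticomplete to $H$; every $c\in C_1$ is complete to $A$, to $M$ (second fact), and to $C_0$ (third fact), hence complete to $H$. Thus every vertex outside $H$ is complete or anticomplete to $H$, so $H$ is a homogeneous set. It contains $A$; it is proper and nontrivial because $B\neq\emptyset$ (the hypothesized co-triangle supplies a vertex of $B$, via Claim A) while $B\cap H=\emptyset$, and $|A|\ge 7$; and since $H\setminus A=M\cup C_0$ contains no vertex anticomplete to $A$, no co-triangle of $G[H]$ uses two vertices of $A$, which is exactly what is required.
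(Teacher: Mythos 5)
Your overall architecture is sound, and your endgame is essentially the paper's: your Claim~A is precisely the paper's claim~(\ref{eq:no-neighbor-in-A}) (a vertex forming a co-triangle with two vertices of $A$ --- equivalently, non-adjacent to two consecutive $v_i$ --- has no neighbour at all in $A$), and granting it, your set $H=A\cup M\cup C_0$ coincides with the paper's homogeneous set $A\cup S$ (where $S$ is the set of vertices outside $A\cup Q$ with no neighbour in $Q$), your three separation facts playing the roles of the paper's (\ref{eq:see-all-A}) and (\ref{eq:see-all-S}). The genuine gap is that you never prove Claim~A, and Claim~A is the crux of the lemma. Your anchored-$C_4$ dichotomy (that $x$ has the same adjacency to $v_b$ and $v_{b+1}$ for every admissible $b$, hence is adjacent to all of $v_4,\dots,v_{k-1}$ or to none) is correct, but you stop exactly at the hard part, saying you ``expect this propagation to be the main obstacle''; ruling out the ``all adjacent'' branch and handling $v_3,v_k$ in the ``none'' branch is left undone, and without it the partition into $B$, $C$, $M$ and all three facts collapse. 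Moreover, some ingredients you gesture at (a $C_5$ on four consecutive $v_i$'s, the oddness of $A$) are not what is needed: the paper proves the claim by a short forced chain of banners alone --- relabel so that $xv_1,xv_2\notin E$ and $xv_3\in E$; then $xv_4\notin E$ (else $\{v_1,v_2,v_3,v_4,x\}$ is a banner), then $xv_5\notin E$ (else $\{v_1,v_2,v_4,v_5,x\}$), then $xv_6\in E$ is forced (else $\{v_2,v_3,v_5,v_6,x\}$), and then $\{v_1,v_2,v_5,v_6,x\}$ is a banner, a contradiction. Your dichotomy can also be finished in the same spirit: in the ``all'' branch, $x\not\sim v_3$ would make $v_3$ a pendant on the $C_4$ $x\,v_4\,v_2\,v_5$, so $x\sim v_3$, and then $v_2$ is a pendant on the $C_4$ $v_1\,v_3\,x\,v_4$, a banner; in the ``none'' branch, $x\sim v_3$ would make $x$ a pendant on the $C_4$ $v_2\,v_5\,v_3\,v_6$, and symmetrically for $v_k$.

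A secondary unjustified step: in your first separation fact you ``pick a non-neighbour $v_q$ of $m$ and a consecutive pair $v_a,v_{a+1}$ of neighbours of $m$ both adjacent to $v_q$.'' Such a triple always exists, but not for an arbitrary choice of the pair: the non-neighbours of $m$ may sit exactly at $v_{a-1}$ and $v_{a+2}$, so one needs a short case analysis (using $k\ge 7$ and the fact that the non-neighbours of $m$ are pairwise non-consecutive) to relocate the pair. The paper sidesteps this by fixing one consecutive pair of neighbours and propagating adjacency around the antihole, as in (\ref{eq:two-consecutive}) and (\ref{eq:see-all-A}). In summary: correct plan and a correct construction of $H$, but the central claim --- the one on which the paper's proof spends its effort --- is asserted rather than proved, and one auxiliary existence claim also needs justification.
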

\begin{proof} 
Let $G$ and $A$ be defined as in the Lemma. Let $Q$ be the set of
vertices $q$ such that $q$ forms a co-triangle with some two
vertices of $A$.  The set $Q$ is not empty by the hypothesis of
the Lemma. Enumerate the vertices
of $A$ as $v_1, v_2 , \ldots , v_k$ such that $v_i v_{i+1}$ is a
non-edge of $G$ with the subscripts taken module $k$, with $k \geq 7$. We will
first establish the claim below.
\begin{equation}
 \begin{minipage}{0.9\linewidth}
\label{eq:no-neighbor-in-A}
Every vertex in $Q$ is $A$-anticomplete.
\end{minipage}\end{equation}
Suppose some vertex $u \in Q$ has some neighbour in $A$. Since $u$ is non-adjacent to some two consecutive
vertices of the antihole, we may assume $u v_1, u v_2 \not \in E(G),$ and $ u v_3 \in E(G)$. We have
the following implications:
\newline \hspace*{2em}
   $u v_4  \not\in E(G)$, for otherwise $G[\{v_1, v_2, v_3, v_4, u \}]$ induces a banner,
\newline \hspace*{2em}
   $u v_5 \not\in E(G)$, for otherwise $G[\{v_1, v_2, v_4, v_5, u \}]$ induces a banner,
\newline \hspace*{2em}
   $u v_6 \in E(G)$, for otherwise $G[\{v_2, v_3, v_5, v_6, u \}]$ induces a banner.

But now $G[\{v_1, v_2, v_5, v_6, u \}]$ induces a banner. We have established (\ref{eq:no-neighbor-in-A}).

Let $R$ be the set of vertices in $V(G) - (A \cup Q)$ that have some neighbors in $Q$, and let $S$
be the set of vertices in $V(G) - (A \cup Q)$ that  have no neighbors in $Q$.
Note that $V(G) = A \cup Q \cup R \cup S$. We will show that
\begin{equation}
 \begin{minipage}{0.9\linewidth}
\label{eq:two-consecutive}
Each vertex in $R \cup S$ is adjacent to some two consecutive vertices of $A$.
\end{minipage}\end{equation}
Consider a vertex $x \in R \cup S$. Since $x \not\in Q$, for any two consecutive vertices of $A$, the vertex $x$ must be adjacent to at least one of them. Since $A$ has an odd number of vertices, $x$ must be adjacent to some two consecutive vertices of $A$.
So (\ref{eq:two-consecutive}) holds . Next, we show that
\begin{equation}
 \begin{minipage}{0.9\linewidth}
\label{eq:see-all-A}
Each vertex in $R$ is $A$-complete.
\end{minipage}\end{equation}
Consider a vertex $x$ in $R$. It is adjacent to some vertex $u$ in $Q$.
By (\ref{eq:two-consecutive}), $x$ is adjacent to some two consecutive vertices of $A$, say, $v_1, v_2$. For any
$j \in \{4, 5, \ldots, k-1\}$, $x$ is adjacent to $v_j$, for otherwise, by (\ref{eq:no-neighbor-in-A}),  $G[\{v_j, v_1, x, v_2, u\}]$
induces a banner. Now, $x$ is adjacent to $v_3$, for otherwise $G[\{v_{k-1}, v_{k-2}, v_3, x,u\}]$
induces a banner. Similarly, $x$ is adjacent to $v_k$. We have establish (\ref{eq:see-all-A}).
\begin{equation}
 \begin{minipage}{0.9\linewidth}
\label{eq:see-all-S}
Each vertex in $R$ is  $S$-complete.
\end{minipage}\end{equation}
Suppose there are vertices $r \in R, s \in S$ with $rs \not\in
E(G)$. By  (\ref{eq:two-consecutive}), $s$ is adjacent to some two
consecutive vertices $v_i, v_{i+1}$ of $A$. By
(\ref{eq:see-all-A}), $r$ is adjacent to $v_i, v_{i+1}$. But now
$G[\{v_i, v_{i+1}, r, s, u\}]$ induces a banner for some neighbor
$u$ in $Q$ of $r$. Thus (\ref{eq:see-all-S}) holds. 

Write $G' = G[A \cup S]$. Then $G'$ satisfies the hypothesis of Lemma~\ref{lem:triangle-free-component}. Let $O$ be the 
component of $\overline{G'}$ that contains $A$. Lemma~\ref{lem:triangle-free-component} implies $O$ is a homogeneous set of $G'$ and is co-triangle-free. By (\ref{eq:no-neighbor-in-A}), (\ref{eq:see-all-A}), and (\ref{eq:see-all-S}), every vertex of $Q \cup R$ is either $O$-complete or $O$-anticomplete, $O$ is a a homogeneous set of $G$. 
\end{proof}
\begin{theorem}\label{thm:main}
Let $G$ be a (banner, odd hole)-free graph. Then one of the
following holds for $G$.
\begin{description}
 \item[(i)] $G$ is Berge.
 \item[(ii)] $\alpha(G) \leq 2$.
 \item[(iii)] Every odd antihole $A$ of $G$ is contained in a homogeneous set $H$ of $G$ such that
           $G[H]$ is co-triangle-free.
\end{description}
\end{theorem}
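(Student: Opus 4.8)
The plan is to combine the Strong Perfect Graph Theorem with the two preceding lemmas, after first recording a convenient freeness. Since $C_5$ is itself an odd hole, every (banner, odd hole)-free graph is automatically (banner, $C_5$)-free, so both Lemma~\ref{lem:triangle-free-component} and Lemma~\ref{lem:co-triangle} are available for $G$ and for each of its induced subgraphs. The same observation shows that, as $G$ contains no $C_5$, every odd antihole of $G$ has at least seven vertices (in particular at least two).

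First I would dispatch the two easy alternatives. If $\alpha(G) \le 2$ we are in case (ii), so assume $\alpha(G) \ge 3$; if $G$ is perfect we are in case (i), so assume in addition that $G$ is not perfect. Because $G$ is odd-hole-free, the Strong Perfect Graph Theorem then guarantees that $G$ contains an odd antihole, and it remains to verify (iii): every odd antihole $A$ of $G$ lies in a homogeneous set $H$ with $G[H]$ co-triangle-free. Fix such an $A$ and split according to whether two of its vertices share a co-triangle of $G$.

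If no co-triangle of $G$ contains two vertices of $A$, then Lemma~\ref{lem:triangle-free-component} applies directly and gives a triangle-free component $O$ of $\overline{G}$ with $A \subseteq O$. Every vertex outside $O$ is adjacent in $G$ to all of $O$, so $O$ is a module; and since $\alpha(G) \ge 3$ produces a co-triangle of $G$, i.e.\ a triangle of $\overline{G}$, which cannot lie in the triangle-free set $O$, the set $O$ is a proper subset of $V(G)$ with $|O| \ge 7$, hence a homogeneous set. As $O$ is triangle-free in $\overline{G}$, the graph $G[O]$ is co-triangle-free, which is exactly (iii).

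The remaining case is the one I expect to be the main obstacle. If some two vertices of $A$ lie in a co-triangle, Lemma~\ref{lem:co-triangle} supplies a homogeneous set $H$ of $G$ with $A \subseteq H$ such that no co-triangle of $G[H]$ contains two vertices of $A$, and I would then work inside $G[H]$. The difficulty is that Lemma~\ref{lem:triangle-free-component} needs stability number at least $3$, which the antihole $A$ cannot furnish on its own since $\alpha(A)=2$; so I would branch on $\alpha(G[H])$. If $\alpha(G[H]) \le 2$, then $G[H]$ has no co-triangle at all, so $H$ itself is the desired co-triangle-free homogeneous set. If $\alpha(G[H]) \ge 3$, then Lemma~\ref{lem:triangle-free-component} applies to $G[H]$ and yields a triangle-free component $O$ of $\overline{G[H]}$ with $A \subseteq O \subsetneq H$ (properness again coming from a triangle of $\overline{G[H]}$ lying outside $O$); then $G[O]$ is co-triangle-free and $O$ is a module of $G[H]$. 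The final point to check is that a module of $G[H]$ nested inside the module $H$ of $G$ is again a module of $G$: a vertex of $H \setminus O$ sees all or none of $O$ because $O$ is a module of $G[H]$, while a vertex outside $H$ sees all of $H$, hence all of $O$, or none, because $H$ is a module of $G$. Thus $O$ is a homogeneous set of $G$ with $G[O]$ co-triangle-free, completing the verification of (iii).
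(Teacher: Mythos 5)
Your proposal is correct and takes essentially the same route as the paper's proof: dispatch (i) and (ii), invoke the Strong Perfect Graph Theorem, then split on whether two vertices of $A$ lie in a common co-triangle, applying Lemma~\ref{lem:triangle-free-component} directly in the first case and, in the second, applying Lemma~\ref{lem:co-triangle} and then Lemma~\ref{lem:triangle-free-component} inside the resulting homogeneous set (handling the co-triangle-free subcase separately), using the fact that a homogeneous set of $G[H]$ nested in the homogeneous set $H$ is a homogeneous set of $G$. The only difference is cosmetic: you work from the statement of Lemma~\ref{lem:co-triangle} alone and spell out the nesting and properness details, whereas the paper invokes the sets $Q$, $R$, $S$ from that lemma's proof.
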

\begin{proof}
Let $G$ be a (banner, odd hole)-free graph, and assume that (i) and (ii) do not hold.
Thus, $G$ contains an odd antihole $A$.
By Lemmas~\ref{lem:triangle-free-component} and~\ref{lem:co-triangle}, there is  
a homogeneous $H$ set of $G$ that contains $A$ and is co-triangle-free. 
\end{proof}
We note the homogeneous set in (iii) can be easily found in $O(n^3)$ if an anti-hole is given.
Actually, with a bit more work, one can find it in $O(n^2)$ time (given
the odd antihole), but this step is not the bottle neck of our recognition and coloring algorithms.

\section{Recognition algorithms}\label{sec:recognition}
In this section we give two polynomial-time algorithms for recognizing a
(banner, odd hole)-free graph. Both algorithms rely on the following easy observation.
\begin{observation}\label{obs:homogeneous-set}
Let $G$ be a graph with a homogeneous set $H$. Let $P$ be a prime induced subgraph of $G$.
Then $P$ cannot contain two vertices in $H$ and some vertex in in $G-H$. \qed
\end{observation}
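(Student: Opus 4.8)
The plan is to argue by contradiction. Suppose, contrary to the claim, that $P$ contains two vertices $h_1, h_2 \in H$ together with some vertex $w$ of $P$ that lies in $G - H$. I would then consider the set $M := V(P) \cap H$ and show that $M$ is a nontrivial homogeneous set of $P$, contradicting the assumption that $P$ is prime.

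First I would verify that $M$ satisfies the size constraints in the definition of a homogeneous set. Since $h_1, h_2 \in M$, we have $|M| \geq 2$. Since $w$ is a vertex of $P$ with $w \notin H$, the vertex $w$ lies in $V(P) \setminus M$, so $M$ is a proper subset of $V(P)$ and hence $|M| < |V(P)|$. Thus $2 \leq |M| < |V(P)|$, as required. This is precisely the step where both hypotheses, ``two vertices in $H$'' and ``some vertex in $G-H$,'' are used.

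Next I would check the adjacency (homogeneity) condition inside $P$. Take any vertex $u \in V(P) \setminus M$. Because $M = V(P) \cap H$, such a $u$ does not belong to $H$, that is, $u \in V(G) \setminus H$. Since $H$ is a homogeneous set of $G$, the vertex $u$ is adjacent either to all vertices of $H$ or to none of them; restricting to $M \subseteq H$, the vertex $u$ is therefore adjacent to all of $M$ or to none of $M$. As this holds for every $u \in V(P) \setminus M$, the set $M$ is a homogeneous set of $P$, contradicting the primality of $P$ and completing the argument.

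There is no genuinely hard step here; the only point requiring care is confirming that $M$ meets the size bounds, in particular that $M$ is neither empty nor all of $V(P)$, which is exactly where the two hypotheses enter. The homogeneity of $M$ within $P$ is then inherited directly from the homogeneity of $H$ within $G$, since adjacency to a subset of $H$ is completely determined by adjacency to $H$.
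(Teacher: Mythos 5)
Your proof is correct: the paper states this observation with its proof omitted (marked \qed as an ``easy observation''), and your argument---showing that $M = V(P) \cap H$ would be a homogeneous set of $P$ of size $2 \leq |M| < |V(P)|$, with homogeneity inherited from $H$ because $P$ is an \emph{induced} subgraph---is exactly the intended one. No gaps; the size verification and the restriction of the adjacency condition are precisely the points that needed checking.
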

The following statement
follows from Observation~\ref{obs:homogeneous-set} and the fact that the odd holes are prime.
\begin{observation}\label{obs:odd-hole}
Let $G$ be a banner-free graph with a homogeneous set $H$. Then
$G$ contains no odd hole if and only if both $H$ and $G[(V(G) - H) \cup \{h\}]$ contain no odd hole,
for any vertex $h \in H$. \qed
\end{observation}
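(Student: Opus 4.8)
The plan is to prove the two directions of the biconditional separately, with the forward direction being immediate and the reverse direction carrying all the content.

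For the forward (``only if'') direction I would simply note that both $H$ and $G[(V(G)-H) \cup \{h\}]$ are induced subgraphs of $G$, and that odd-hole-freeness is hereditary: any odd hole sitting inside an induced subgraph of $G$ is, verbatim, an odd hole of $G$. Hence if $G$ has no odd hole, neither of the two subgraphs can have one. Nothing further is required here.

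For the reverse (``if'') direction I would argue by contradiction. Suppose $G$ contains an odd hole $C$, so $C = C_k$ with $k$ odd and $k \geq 5$. The one structural input I need is that every such odd hole is a \emph{prime} graph. I would verify this directly: if $M$ were a non-trivial module of $C_k$, then a vertex $w \notin M$ adjacent to some vertex of $M$ would have to be adjacent to all of $M$, but a cycle vertex has only two neighbours, forcing $M$ to consist of the two neighbours of $w$; tracing one neighbour one further step along the cycle (possible since $k \geq 5$) then exhibits a vertex outside $M$ adjacent to exactly one vertex of $M$, a contradiction. With $C$ established as prime, I apply Observation~\ref{obs:homogeneous-set} to the homogeneous set $H$ and the prime induced subgraph $C$: it is impossible for $C$ to contain two vertices of $H$ together with a vertex of $G-H$.

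This dichotomy finishes the argument. Either $C$ lies entirely inside $H$, in which case $C$ is an odd hole of $H$, contradicting the hypothesis that $H$ is odd-hole-free; or $C$ meets $H$ in at most one vertex, so $V(C) \subseteq (V(G)-H) \cup \{h'\}$ for a single $h' \in H$ and thus $C$ is an induced subgraph of $G[(V(G)-H) \cup \{h'\}]$. In the latter case I would invoke the isomorphism remarked on earlier in the excerpt — since $H$ is homogeneous, $G[(V(G)-H) \cup \{h'\}]$ is isomorphic to $G[(V(G)-H) \cup \{h\}]$ for the designated $h$ — to transport $C$ into $G[(V(G)-H) \cup \{h\}]$, contradicting its odd-hole-freeness. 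Either way we reach a contradiction, so $G$ has no odd hole. The only real obstacle is the verification that odd holes are prime, which is precisely what lets Observation~\ref{obs:homogeneous-set} do the work; everything else is routine. I would also flag that the banner-free hypothesis is in fact never used, so the statement holds for arbitrary graphs $G$ and is stated with banner-freeness only to match the ambient setting of the section.
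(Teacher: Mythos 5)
Your proof is correct and takes exactly the route the paper intends: the paper states this observation without an explicit proof, saying only that it follows from Observation~\ref{obs:homogeneous-set}, and your argument (odd holes have at least five vertices, hence are prime, so the dichotomy forced by that observation plus the isomorphism $G[(V(G)-H)\cup\{h'\}] \cong G[(V(G)-H)\cup\{h\}]$ finishes the job) is precisely the derivation the paper leaves implicit. Your remark that the banner-free hypothesis is never used is also accurate; it is stated only to match the setting of the section.
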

Let ${\cal B}$ denote the class of (banner, odd hole)-free
graphs. The first algorithm follows the steps suggested by
Theorem~\ref{thm:main}. First, it checks in $O(n^5)$ time that the
input graph $G$ is (banner, $C_5$)-free.  If $\alpha(G) \leq 2$,
then $G$ is in ${\cal B}$. Assume $\alpha(G) \geq 3$. Also we may
assume without loss of generality that $G$ is connected. Now, use
the Berge graph recognition algorithm to find an odd hole, or
odd antihole (if one exists) in $O(n^{10})$ time (it is $O(n^9)$ to recognize Berge graphs,
but to find an odd hole or odd antihole, we have to repeat this algorithm $n$ times). 
If no such hole or antihole is
found, $G$ is in ${\cal B}$. If an odd hole is found, $G$ is not
in ${\cal B}$. Assume an odd antihole $A$ is found. Using the proof of 
Theorem~\ref{thm:main}, find a homogeneous set $H$ with $\alpha(H) =2 $ containing $A$
in $O(n^3)$ time. Recursively apply the above procedure on 
$G[(V(G) - H) \cup \{h\}]$ for a vertex $h \in H$ (since $\alpha(H) = 2$, $H$ contains no odd hole and so we do not 
need to apply the procedure on $H$).  The recursion is called
at most $n$ times by the analysis of Algorithm~\ref{al:modular-decomposition}. Thus, our algorithm runs in $O(n^{11})$ time.

The second algorithm bypasses the obvious approach of the first algorithm. It uses deep properties of the
modular decomposition discussed in Subsection~\ref{sec:modular-decomposition}. Observation~\ref{obs:homogeneous-set}
implies the following Lemma.
\begin{lemma}\label{lem:recognition1}
Let $G$ be a banner-free graph with a modular decomposition ${\cal P} = (M_1, M_2,$  $\ldots,  \, M_\ell)$.
Then $G$ is odd-hole-free if and only if the quotient graph $G_{ {\cal P} }$ and all graphs $G[M_i]$ are odd-hole-free. \qed
\end{lemma}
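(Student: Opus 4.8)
The plan is to prove the two implications separately, with the forward direction being routine and the reverse direction carrying all of the content through Observation~\ref{obs:homogeneous-set}.

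For the forward implication, suppose $G$ is odd-hole-free. Each $G[M_i]$ is an induced subgraph of $G$, and the quotient $G_{\cal P}$ is isomorphic to the induced subgraph $G[\{m_1,\dots,m_\ell\}]$ obtained by picking one representative $m_i\in M_i$ from each module: since adjacency between two distinct modules is all-or-nothing, the representatives reproduce exactly the edges of $G_{\cal P}$. As every induced subgraph of an odd-hole-free graph is odd-hole-free, both $G_{\cal P}$ and every $M_i$ are odd-hole-free at once.

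For the reverse implication I would argue by contradiction. Suppose $G_{\cal P}$ and every $M_i$ are odd-hole-free, yet $G$ contains an odd hole $C$. The key first step is to observe that $C$, being a chordless cycle of length at least five, contains no homogeneous set of its own and is therefore a prime induced subgraph of $G$. Hence Observation~\ref{obs:homogeneous-set} applies to $C$ together with each non-trivial module $M_i$ (a homogeneous set of $G$): $C$ cannot contain two vertices of $M_i$ and also a vertex outside $M_i$. This yields, for every $i$, the dichotomy that either $V(C)\subseteq M_i$ or $|V(C)\cap M_i|\le 1$, the latter holding trivially when $M_i$ is a singleton. If $V(C)\subseteq M_i$ for some $i$, then $G[M_i]$ contains the odd hole $C$, contradicting that $M_i$ is odd-hole-free. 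Otherwise every module meets $C$ in at most one vertex, so the vertices of $C$ lie in pairwise distinct modules; contracting each vertex to its module carries $C$ to an induced odd hole of $G_{\cal P}$ (again using the all-or-nothing adjacency between distinct modules, so that no chords are created and no edges are lost), contradicting that $G_{\cal P}$ is odd-hole-free. Either way we reach a contradiction, so $G$ is odd-hole-free.

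The step I expect to require the most care is the second case of the dichotomy: verifying that the contraction of $C$ is genuinely an \emph{induced} odd hole of $G_{\cal P}$, rather than a cycle that acquires chords, which rests squarely on the module property. I would also be careful to make the case split exhaustive by handling singleton modules (for which Observation~\ref{obs:homogeneous-set} is vacuous) separately. I note in passing that the banner-free hypothesis is not actually invoked in either direction; it is inherited from the ambient setting of this section.
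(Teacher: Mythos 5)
Your proof is correct and takes essentially the same approach as the paper, which states the lemma with no written proof beyond the remark that it follows from Observation~\ref{obs:homogeneous-set}; your argument is exactly the fleshed-out version of that derivation (odd holes have length at least five, hence are prime, so each either lies inside a single module or meets every module in at most one vertex and thus survives as an induced odd hole of the quotient). Your closing observation is also accurate: the banner-free hypothesis is never used, and the lemma holds for arbitrary graphs.
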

Given a graph $G$. The second algorithm starts by verifying in $O(n^5)$ time that $G$ is (banner, $C_5$)-free. If $G$ is not connected, then we recursively apply our algorithm on each component $C_i$ of $G$; graph $G$ is in ${\cal B}$ if and only if each $G[C_i]$ is in ${\cal B}$. If $\overline{G}$ is disconnected, then we recursively apply our algorithm on each subgraph $G_i$ of $G$ induced by the vertices of the components of $\overline{G}$; graph $G$ is in ${\cal B}$ if and only if each $G_i$ is in ${\cal B}$. Now, suppose that both $G$ and $\overline{G}$ are connected. 
If $\alpha(G) \leq 2$, then $G$ is in ${\cal B}$ since $G$ cannot contain an odd hole of length at least seven.  
Assume $\alpha(G) \geq 3$. The algorithm finds a modular decomposition of $G$ with the partition ${\cal P} = (M_1, M_2,$  $\ldots,  \, M_\ell)$.
The quotient graph $G _{ {\cal P}}$ is prime. If  $\alpha(G _{ {\cal P}}) \leq 2$, then $G _{ {\cal P}}$ is in ${\cal B}$ since $G$ cannot contain an odd hole of length at least seven. Consider the case where $\alpha(G _{ {\cal P}}) > 2$. 
By Theorem~\ref{thm:main}, $G _{ {\cal P}}$ is  in ${\cal B}$ if and only if $G _{ {\cal P}}$ is Berge. Bergeness of $G _{ {\cal P}}$ can be verified in $O(n^9)$ time using the 
algorithm of \cite{ChuCor2005}.
Now, recursively verify that every $G[M_i]$ is odd-hole-free. Theorem~\ref{thm:decomp} implies $ \mid G _{ {\cal P}} \mid + \sum_1^\ell$
$\mid M_i \mid = O(n)$. Thus, the second algorithm runs in $O(n^9)$ time.

Actually, we have shown the following: Let $p(n)$ (respectively, $c(n)$, $b(n)$, $t(n)$) be the time complexity of
finding an odd hole or odd antihole (respectively, $C_5$, banner, triangle), and let $a(n) = max(p(n), c(n), b(n), t(n))$. Then there is
an $O(a(n))$-time algorithm to recognize graphs in ${\cal B}$. Here, we are making the reasonable assumption that $a(n)$ is at least
$O(n+m)$. In the current state of knowledge, recognizing perfect graph is the bottleneck of the algorithm.

\section{Optimizing (banner, odd hole)-free graphs}\label{sec:optimization}
In this section, we consider the following four optimization
problems: finding a minimum coloring, finding a minimum clique
cover (the minimum coloring in the complement of the graph),
finding a largest stable set, and finding a largest clique. It is
interesting to note that for our class of (banner, odd hole)-free
graphs, the coloring and largest stable set problems are solvable
in polynomial time, whereas the clique cover and largest clique
problems are NP-hard.
\subsection{Finding a minimum coloring in polynomial time}\label{sec:coloring}
In this section, we describe a robust algorithms to  color a
(banner, odd hole)-free graph. The polynomial-time perfect graph coloring
algorithm of Gr\"otschel et al. (\cite{GroLov1984}) is robust.
Given a graph $G$, the algorithm from \cite{GroLov1984} either returns an optimal
coloring of $G$ (and a clique of the same size proving the
coloring is minimum) or a correct declaration that $G$ is not
perfect.

Concerning graphs $G$ with $\alpha(G) \leq 2$, it is well known
that a minimum coloring of  $G$ can be found by finding a maximum
matching $M$ in the complement $\overline{G}$ of $G$. (Let $t$ be
the number of edges of $M$. Then we have $\chi(G) = n - t$.)  The
algorithm of Micali and Vazirani (\cite{MicVaz1980}) finds a
maximum matching of a graph in $O(m \sqrt{n})$ time.

We may assume the algorithm below (Algorithm~\ref{alg:easy-color}:
DIRECT-COLOR(G)) runs in polynomial time. DIRECT-COLOR($G$)
returns an optimal (minimum) coloring if $G$ is perfect or satisfies
$\alpha(G) \leq 2$.
\begin{algorithm}
\caption{DIRECT-COLOR(G)}\label{alg:easy-color}
\begin{algorithmic}
 \STATE{\textbf{input:}} graph $G$
 \STATE{\textbf{output:}} An optimal coloring of $G$ or a message that $G$ is imperfect with $\alpha(G) \geq 3$.
 \STATE{\textbf{remark:}} The algorithm returns an optimal coloring if $G$ is perfect or $\alpha(G) \leq 2$.
 \STATE
 \IF{ $\alpha(G) \leq 2$}
    \STATE{find an optimal coloring of $G$ via the matching algorithm}
    \STATE{return the optimal coloring and stop}
 \ENDIF
 \STATE
 \STATE{apply the perfect graph coloring algorithm on $G$}
 \IF{an optimal coloring is returned}
    \STATE{return the optimal coloring and stop}
    \ELSE
    \STATE{return ``$G$ is imperfect with $\alpha(G) \geq 3$''}
 \ENDIF
\end{algorithmic}
\end{algorithm}

Let $G$ and $L$ be two vertex-disjoint graphs. Let $G$ have a
homogeneous set $H$. Define $g(G, H, L)$ to be the graph obtained
from $G$ by substituting $L$ for $H$, that is, $g(G, H, L)$ is
obtained from $G$ by (i) removing the set $H$, (ii) adding the
graph $L$, and (iii) for each vertex $v \in G-H$ that has a
neighbor in $H$, adding all edges between $v$ and $L$. Note that
$L$ is a homogeneous set of $g(G, H, L)$. Since the following
observation is easy to establish, we omit its proof.
\begin{observation}\label{obs:homegeneous-coloring}
Let $G$ be a graph with a homogeneous set $H$. Let $\chi(H) = k$. Let $K_k$ be a clique
on $k$ vertices. Then
\begin{description}
\item[(i)]
   $\chi(G) = \chi(\; g(G,H,K_k) \;)$.
   \item[(ii)] if $G$ is (banner, odd hole)-free then so is $g(G,H,K_k)$.  \qed
\end{description}
\end{observation}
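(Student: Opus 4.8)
The plan is to prove the two parts separately, each by reducing to the defining property of the substitution $g(G,H,K_k)$: every vertex of $G-H$ either sees all of $H$ (call this set $N$) or none of it (call this set $S$), and by construction the same dichotomy holds with $K_k$ in place of $H$. Throughout I write $G' = g(G,H,K_k)$ and $k = \chi(H)$, and I note that $G$ and $G'$ induce the same subgraph on $V(G)-H = V(G')-K_k$.

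For part (i) I would establish the two inequalities $\chi(G') \le \chi(G)$ and $\chi(G) \le \chi(G')$ directly. For the first, start from an optimal coloring of $G$. Every vertex of $H$ is adjacent to every vertex of $N$, so the set of colors used on $H$ is disjoint from the set used on $N$; moreover it has size at least $\chi(H)=k$, since it properly colors $H$. Hence at least $k$ colors avoid $N$, and I can recolor by assigning $k$ of them, one per vertex, to the clique $K_k$ while leaving $G-H$ untouched. This is proper, because the vertices of $S$ are non-adjacent to $K_k$ and impose no constraint, and it introduces no new color, giving $\chi(G') \le \chi(G)$. The reverse inequality is symmetric: from an optimal coloring of $G'$ the clique $K_k$ uses exactly $k$ colors, all disjoint from the colors on $N$, and I recolor $H$ with a proper $k$-coloring of $G[H]$ drawn from exactly those $k$ colors, leaving $V(G)-H$ fixed; this is proper and uses no new color.

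For part (ii) I would argue by contraposition: assuming $G'$ contains an induced banner or odd hole $F$, I exhibit one in $G$. The crux is that \emph{$F$ meets $K_k$ in at most one vertex}. Indeed, any two vertices of $F \cap K_k$ are adjacent (as $K_k$ is a clique) and have the same closed neighborhood within $F$, since each remaining vertex of $F$ lies in $N$ and sees both of them, or in $S$ and sees neither; that is, they would be adjacent true twins in $F$. A short verification shows that neither a banner nor a hole contains two adjacent vertices with identical closed neighborhood: in the banner the only adjacent pairs of equal degree fail the twin test, and in a chordless cycle of length at least four two adjacent vertices always have distinct outer neighbors. Granting $|F \cap K_k| \le 1$, the conclusion is immediate. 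If $F$ avoids $K_k$, then $F$ is already an induced subgraph of $G$. If $F$ contains a single vertex $w$ of $K_k$, then replacing $w$ by an arbitrary $h \in H$ yields an isomorphic induced banner or odd hole in $G$, because $h$ and $w$ have the same adjacencies to $N$ and to $S$.

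I expect no serious obstacle, consistent with the paper's remark that the observation is easy; the only points needing care are the color-set disjointness bookkeeping in part (i) and the finite twin-freeness check for the banner in part (ii), both of which are routine.
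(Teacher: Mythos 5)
Your proof is correct. Note, however, that the paper offers no proof to compare against: it states ``Since the following observation is easy to establish, we omit its proof,'' so your argument is filling a deliberate gap rather than paralleling an existing one. Both halves of what you wrote are sound: the color-set bookkeeping in part (i) is the standard substitution argument (colors on $H$ are necessarily disjoint from colors on $N$, and there are at least $k$ of them, so they can be transferred to $K_k$, and conversely), and the true-twin analysis in part (ii) correctly forces $|F \cap K_k| \le 1$, after which the single vertex of $K_k$ can be swapped for any $h \in H$. One remark worth making: your part (ii) is essentially a hands-on re-derivation of the paper's own Observation~\ref{obs:homogeneous-set}, which says that a prime induced subgraph cannot contain two vertices of a homogeneous set together with a vertex outside it. Since $K_k$ is a homogeneous set of $g(G,H,K_k)$ (as the paper notes when defining the substitution) and both the banner and every hole are prime graphs, you could have cited that observation directly in place of the explicit closed-neighborhood check on the banner and on chordless cycles; your twin verification is precisely the proof that these particular graphs are prime, restricted to adjacent pairs, which suffices because $K_k$ is a clique. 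Either route is fine; yours is self-contained, while the citation route makes the observation a one-line corollary of machinery the paper already set up.
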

We need to discuss part (i) of  Observation~\ref{obs:homegeneous-coloring} before describing our algorithm.
Assume we are given a minimum coloring of $H$ with stable sets (color classes) 
$S_1, \ldots, S_k$.
Enumerate the vertices of $K_k$ as $v_1, \ldots, v_k$. Assume in a minimum coloring of $g(G,H,K_k)$, the vertices
$v_i$ has color $c(i)$. From  a minimum coloring of $g(G,H,K_k)$, we may obtain a minimum coloring of $G$ by
assigning to each vertex in $S_i$ the color $c(i)$.

Consider a graph $G$ with a modular decomposition ${\cal P} = (M_1, \ldots , M_t)$. Note that
if $G$ is prime then each $M_i$ is a single vertex. Our coloring algorithm (Algorithm~\ref{alg:color}: COLOR($G$)) will return a minimum coloring
of $G$, or correctly declare that $G$ is not (banner, odd hole)-free. If $G$ is prime, then a single call to
DIRECT-COLOR($G$) will return a minimum coloring of $G$, or a message that $G$ is imperfect with $\alpha(G) \geq 3$. In  the latter case, by Theorem~\ref{thm:main} we know that $G$ is not (banner, odd hole)-free.
Now, assume $G$ is not prime.
For each module $M_i$, the algorithm
is recursively applied to $M_i$. If $M_i$ is not (banner, odd hole)-free, then neither is $G$. If a minimum
coloring of $M_i$ with $t$ colors is returned, then we substitute a clique with $t$ vertices for $M_i$ in $G$.
This substitution is done for all modules of the decomposition. Let $F$ be the graph obtained from these
substitutions. By Theorem~\ref{thm:main}, if $\alpha(F) \geq 3$, then $F$ cannot contain an odd antihole $A$, for
otherwise  $A$ is contained in some strong module of $G$ and this module overlaps some strong  module $M_i$
of the decomposition, a contradiction. Thus a single call to DIRECT-COLOR($F$) will return a minimum coloring of $F$ (and thus,
of $G$) or a correct declaration that $G$ is not (banner, odd hole)-free.
Our algorithm is described below (Algorithm~\ref{alg:color}).
\begin{algorithm}
\caption{COLOR(G)}\label{alg:color}
\begin{algorithmic}
 \STATE{\textbf{input:}} graph $G$ with a modular decomposition ${\cal P} = (M_1, \ldots, M_\ell)$.
 \STATE{\textbf{output:}} an optimal coloring of $G$ or a message that $G$ is not (banner, odd hole)-free.
 \STATE
 \IF{$G$ is prime}
    \STATE call DIRECT-COLOR($G$)
    \IF{a minimum coloring ${\cal C}$ of $G$ is returned}
             \STATE return  ${\cal C}$ and stop
         \ELSE
            \STATE return ``$G$ is not (banner, odd hole)-free'' and stop
    \ENDIF
 \ENDIF
 \STATE
 \FOR{each $M_i$ of ${\cal P} $}
   \STATE call COLOR($M_i$)
   \IF{an optimal coloring of $M_i$ with $t_i$ colors is returned}
      \STATE{construct a clique $C_i$ on $t_i$ vertices}
    \ELSE
      \STATE{return ``$G$ is not (banner, odd hole)-free'' and stop}
    \ENDIF
 \ENDFOR
 \STATE
  \STATE{let $F$ be the graph obtained from $G$ by substituting the clique $C_i$ for $M_i$ for all $i$}
  \STATE{call DIRECT-COLOR($F$)}
      \IF{an optimal coloring ${\cal C}$ of $F$  is returned}
       \STATE{ from ${\cal C}$, construct an optimal coloring ${\cal C}'$ of $G$}
       \STATE return ${\cal C}'$ and stop
     \ELSE
        \STATE return ``$G$ is not (banner, odd hole)-free'' and stop
     \ENDIF

\end{algorithmic}
\end{algorithm}

The algorithm can be make more efficient by skipping the trivial 
modules of the decomposition, but this does not improve the worst
case complexity. Since the modules of ${\cal P}$ are strong and
therefore pairwise non-intersecting, the number of  recursively
calls to COLOR is bounded by $2n -1$ for $n \geq 2$. This fact can be easily seen
by induction as follows: let $s_i = | M_i |$, by induction the
number of recursive calls on $G$ (with the modules $M_i$) is at most $ 1 + (2 s_1
- 1) + (2 s_2 - 1) + \ldots + (2 s_t -1 ) = (2
\sum_{i=1}^{t} s_i) - t + 1 \leq  2n -1$. Let $g(n)$
(respectively, $t(n)$) be the time complexity of coloring a
perfect graph (respectively, find a maximum matching of a graph).
By Theorem~\ref{thm:decomp},  Algorithm~\ref{alg:color} has
complexity $O( n \; (max(g(n), t(n) ) )$.
\subsection{Finding a largest clique is hard}\label{sec:find-clique}
It was proved by Poljak (\cite{Pol1974}) that it is NP-hard to
determine $\alpha(G)$ for a triangle-free graph $G$. We are going to use Poljak's argument to show that 
finding a largest clique of a (banner, odd hole)-free graph is NP-hard. Let $G$ be a graph.
Construct a graph $f(G)$ from $G$ by, for each edge $ab$ of $G$,
replacing $ab$ by an induced path on three edges, that is, we
subdivide the edge $ab$ twice. Then it is easy to see that
$\alpha(f(G)) = \alpha(G) + m$, where $m$ is the number of edges
of $G$. The graph $f(G)$ is triangle-free. By repeatedly applying
this construction, it is seen that it is NP-hard to compute
$\alpha(G)$ for a graph $G$ of any given minimum girth $g$. In particular,
it is NP-hard to compute $\alpha(G)$ for a (triangle, $C_5$)-free
graph $G$, or equivalently, to compute $\omega(F)$ for a
(co-triangle, $C_5$)-free graph $F$. Since (co-triangle,
$C_5$)-free graphs are (banner, odd hole)-free, it is NP-hard to
compute $\omega(G)$ for a (banner, odd hole)-free graph.
\subsection{Finding a minimum clique cover is hard}\label{sec:find-clique-cover}
It was noted by Jensen and  Toft (section 10.3 of
\cite{JenTof1995}) that it is NP-complete to decide, for any fixed
integer $g$, whether a graph $G$ of girth at least $g$ is
6-colorable (this result uses the Haj\'os construction for graphs
of high chromatic number). Thus, it is NP-hard to find a minimum
coloring of a $(C_3, C_5)$-free graph, or equivalently, to
find a minimum clique cover of a (co-triangle,  $C_5$)-free
graph $G$. Since such a graph $G$ is (banner, odd hole)-free, it
is NP-hard to find a minimum clique cover of a (banner, odd
hole)-free graph.
\subsection{Finding a largest stable set in polynomial time}\label{sec:find-stable-set}
In this section, we will describe a polynomial-time algorithm for
find a largest stable set of a (banner, odd hole)-free graph. Our
algorithm is robust. We will actually solve the more general
problem of finding a maximum weighted stable set.

Let $G$ be a graph whose vertices are given ``weights'', that is,
each vertex $v$ is given an integer $w_G(v)$. When the context is
clear, we will drop the subscript $G$ and let $w(x) = w_G(x)$. The
problem is to find a stable set $S$ which maximizes $\sum_{v \in
S} w_G(v)$; this sum is denoted by $\alpha_w(G)$. We will refer to
this problem as the Maximum Weighted Stable Set problem, or WSS
for short. The algorithm of Gr\"otschel et al. (\cite{GroLov1984})
actually solves, for a perfect graph, the ``minimum weighted
coloring'' (which we will define later) and the ``maximum
weighted clique'' (which is WSS for the complement of the graph).
The algorithm of Gr\"otschel et al. is a robust algorithm: given a
weighted graph $G$, it either returns a maximum weighted stable
set of $G$, or correctly declares that $G$ is not perfect.
Finally, when $G$ is a graph with $\alpha(G) \leq 2$, then WSS
can be solved in polynomial time since there are only $O(n^2)$
stable sets. We can list them all and take the one with the
maximum weight.

Let $H$ be a homogeneous set of a weighted graph $G$. By $g(G,H,h)$, we
denote the weighted graph obtained from $G$ by substituting a
vertex $h$ for $H$ where the weight function $w'$ for $g(G,H,h)$
is defined as follows.  For the vertex
$h$, we set $w'(h) = \alpha_w(H)$ and for all $x \in G-H$, we set $w'(x) = w(x)$.  
It is easy to see that
$\alpha_w(G) = \alpha_{w'}( g(G,H,h) )$.

Let $G$ be a connected (banner, odd hole)-free graph with
$\alpha(G) \geq 3$. If $G$ is not perfect, then by
Theorem~\ref{thm:main}, $G$ contains a homogeneous set $H$. We
recursively solve WSS for $H$, then substitute a vertex $h$ for
$H$, where the weight of $h$ is equal to $\alpha_w(H)$. We obtain
the graph $F=g(G,H,h)$. Now, solve WSS for $F$. From a maximum
weighted stable set of $F$, we can easily construct a maximum
weighted stable set of $G$. This discussion shows WSS can be
solved in polynomial time for (banner, odd hole)-free graphs. We
can design a more efficient, and robust, algorithm by imitating
the coloring algorithm in Section~\ref{sec:coloring}.

First, we construct the algorithm DIRECT-STABLE-SET
(Algorithm~\ref{alg:easy-stable-set}) that takes as input a
weighted graph $G$. If $\alpha(G) \leq 2$, it returns a maximum
weighted stable set and terminates. Otherwise, it applies the
stable set algorithm of Gr\"otschel et al. to $G$. Then the
algorithm either returns a maximum weighted stable set, or a
message that $G$ is imperfect with $\alpha(G) \geq 3$. 
\begin{algorithm}
\caption{DIRECT-STABLE-SET(G)}\label{alg:easy-stable-set}
\begin{algorithmic}
 \STATE{\textbf{input:}} a graph $G$ with a weight function $w$ on its vertices.
 \STATE{\textbf{output:}} a maximum weighted stable set of $G$  or a message that $G$ is imperfect with $\alpha(G) \geq 3$.
 \STATE{\textbf{remark:}} The algorithm returns a maximum weighted stable set of $G$ whenever $G$ is perfect or $\alpha(G) \leq 2$.
 \STATE
 \IF{ $\alpha(G) \leq 2$}
    \STATE{find maximum weighted stable set $S$ of $G$ by listing all stable sets of $G$ and taking one with the largest weight}
    \STATE{return $S$ and stop}
 \ENDIF
 \STATE
 \STATE{apply the stable set algorithm for perfect graph  on $G$}
 \IF{a maximum weighted stable set $S$ is returned}
    \STATE{return $S$ and stop}
    \ELSE
    \STATE{return ``$G$ is imperfect with $\alpha(G) \geq 3$''}
 \ENDIF
\end{algorithmic}
\end{algorithm}

Consider a graph $G$ with a modular decomposition ${\cal P} =
(M_1, \ldots, M_\ell )$. Algorithm~\ref{alg:mwss} (WSS($G$)) returns a maximum weighted stable
set of $G$, or correctly declares that $G$ is not (banner, odd
hole)-free. If $G$ is prime, then a single call to
DIRECT-STABLE-SET($G$) either returns a maximum weighted stable
set of $G$, or a message that $G$ is imperfect with $\alpha(G) \geq 3$. In the latter case, 
by Theorem~\ref{thm:main}, we know $G$ is not (banner, odd hole)-free.

Now, suppose $G$ is not prime. If $G$ is not connected, then we can apply our algorithm
on each component $C_i$ of $G$, and it is easy to construct a maximum weighted stable set of $G$ from 
the maximum weighted stable sets of the components $C_i$. Similarly, if $\overline{G}$ is disconnected, then
we apply our algorithm on each component of $\overline{G}$. The details of these two cases are spelled out in
Algorithms~\ref{alg:union} and~\ref{alg:join} in the Appendix.

Now, we may assume that $G$ and $\overline{G}$ are connected.
For each module $M_i$ of the modular decomposition,
the algorithm is recursively applied on $G[M_i]$. If $G[M_i]$ is not
(banner, odd hole)-free, then neither is $G$. If a maximum
weighted stable set $S_i$ is returned, then the algorithms
substitutes a vertex $h_i$ for $M_i$ in $G$ and gives $h_i$ a
weight equal to $\alpha_w(M_i)$. This substitution is done for all
modules of the decomposition. Let $F$ be the graph obtained from
these substitutions. Note that $F$ is the quotient graph $G_{
{\cal P} }$ and so it is prime. By Theorem~\ref{thm:main}, if $\alpha(F) \geq 3$, then $F$ cannot contain an odd antihole $A$. Thus a single call to DIRECT-STABLE-SET($F$) will return a maximum weighted stable set of $F$ (from this
we can construct a maximum weighted stable set of $G$ in the
obvious way) or a correct declaration that $G$ is not (banner, odd
hole)-free. Our algorithm is described below
(Algorithm~\ref{alg:mwss}). Let $p(n)$ be the time complexity of
finding a maximum weighted stable set of a perfect graph. Our
algorithm runs in $O( p(n) + n^2 )$ time with the term $n^2$
coming from solving WSS for graph with no co-triangles.

\begin{algorithm}
\caption{WSS(G)}\label{alg:mwss}
\begin{algorithmic}
 \STATE{\textbf{input:}} graph $G$ with  a modular decomposition ${\cal P} = (M_1, \ldots, M_\ell)$ and a weight function $w$ on its vertices.
 \STATE{\textbf{output:}} a maximum weighted stable set of $G$ or a message that $G$ is not (banner, odd hole)-free.
 \STATE
\IF{$G$ is prime}
    \STATE call DIRECT-STABLE-SET($G$)
    \IF{a maximum weighted stable set $S$ of $G$ is returned}
             \STATE return  $S$ and stop
         \ELSE
            \STATE return ``$G$ is not (banner, odd hole)-free'' and stop
    \ENDIF
 \ENDIF
 \STATE
 \IF{$G$ is not connected}
	 \STATE call STABLE-SET-UNION($G$)
	 \IF{ a maximum weighted stable set $S$ is returned }
		 \STATE return $S$ and stop
	\ELSE
		\STATE return 	``$G$ is not (banner, odd hole)-free'' and stop
	\ENDIF
\ENDIF
\STATE
 \IF{$\overline{G}$ is not connected}
	 \STATE call STABLE-SET-JOIN($G$)
	\IF{ a maximum weighted stable set $S$ is returned }
		\STATE return $S$ and stop
	\ELSE
		\STATE return 	``$G$ is not (banner, odd hole)-free'' and stop
	\ENDIF
\ENDIF
 \STATE
 \FOR{each module $M_i$ of ${\cal P} $}
   \STATE call WSS($G[M_i]$)
   \IF{a maximum weighted stable set $S_i$ of $G[M_i]$  is returned}
      \STATE{compute $\alpha_w(G[M_i])$ which is the sum of the weights of the vertices in $S_i$}
    \ELSE
      \STATE{return ``$G$ is not (banner, odd hole)-free'' and stop}
    \ENDIF
 \ENDFOR
 \STATE
  \STATE{Let $F$ be the graph obtained from $G$ by substituting a vertex $s_i$ for $M_i$ for all $i$, and set $w(s_i) = \alpha_w(G[M_i])$}
  \STATE{call DIRECT-STABLE-SET($F$)}
      \IF{a maximum weighted stable set $S$ of $F$  is returned}
       \STATE{ construct a maximum weighted stable set $S^*$ of $G$ from $S$}
       \STATE{ return $S^*$ and stop}
     \ELSE
        \STATE return ``$G$ is not (banner, odd hole)-free'' and stop
     \ENDIF
 \end{algorithmic}
 \end{algorithm}

\section{The 2-divisibility
of (banner, odd hole)-free graphs}\label{sec:divisibility}
In this section, we prove that (banner, odd hole)-free graphs are
2-divisible. Recall that a graph $G$ is {\it $k$-divisible} if the
vertex set of each induced subgraph $H$ of $G$ with at least one
edge can be partitioned into $k$ sets none of which contains a
clique of size $\omega(H)$.  It is easy to see that perfect graphs
are 2-divisible. We will need the following two results by Ho\`ang
and McDiarmid (\cite{HoaMcd2002}).
\begin{lemma}[\cite{HoaMcd2002}, Lemma 3]\label{lem:substitution}
Let $G$ be a  graph with a homogeneous set $H$. If for some $k
\geq 2$, both  $H$ and $G[(V(G) - H) \cup \{h\}]$, for any vertex
$h \in H$, are $k$-divisible, then so is $G$. \qed
\end{lemma}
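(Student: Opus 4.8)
The plan is to check $k$-divisibility of $G$ straight from the definition, by producing, for every induced subgraph $F$ of $G$ with at least one edge, a partition of $V(F)$ into $k$ parts none of which contains a clique of size $\omega(F)$. I would fix such an $F$ and write $B = V(F)\cap H$ and $A = V(F)\setminus H$; because $H$ is homogeneous, $B$ is a module of $F$ and each vertex of $A$ is complete or anticomplete to $B$, so I can let $N\subseteq A$ denote those complete to $B$. Two configurations are disposed of at once. If $|B|\le 1$, then identifying the lone vertex of $B$ (if present) with $h$ realises $F$ as an induced subgraph of $G' := G[(V(G)-H)\cup\{h\}]$ (legitimate since every outside vertex sees all or none of $H$), and the partition is inherited from the $k$-divisibility of $G'$; if $A=\emptyset$, then $F$ is an induced subgraph of $H$ and I use the $k$-divisibility of $H$.

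The real work is the case $|B|\ge 2$ and $A\ne\emptyset$. First I would record the two clique counts that govern the argument: since any clique of $F$ meeting both sides has its $A$-part inside $N$, one gets $\omega(F)=\max\bigl(\omega(F[A]),\,\omega(F[N])+\omega(F[B])\bigr)$, whereas in the quotient $F':=F[A\cup\{h\}]$ --- an induced subgraph of $G'$ in which $h$ is adjacent exactly to $N$ --- one gets $\omega(F')=\max\bigl(\omega(F[A]),\,1+\omega(F[N])\bigr)$. Assuming both $F[B]$ and $F'$ have an edge, the hypothesis supplies a partition $(P_1,\dots,P_k)$ of $A\cup\{h\}$ avoiding max cliques of $F'$ and a partition $(Q_1,\dots,Q_k)$ of $B$ avoiding max cliques of $F[B]$, and I would simply glue them as $R_i=(P_i\setminus\{h\})\cup Q_i$.

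To verify that no $R_i$ contains a max clique $K$ of $F$, I would split on where $K$ lies. If $K$ meets $B$, then $K\cap B$ attains $\omega(F[B])$, hence is a max clique of $F[B]$ sitting inside $R_i\cap B=Q_i$, which is impossible. If $K\subseteq A$, then $\omega(F)=\omega(F[A])$, and here the decisive observation is that $F[B]$ carries an edge, so $\omega(F[B])\ge 2$ forces $\omega(F[N])+\omega(F[B])>1+\omega(F[N])$ and therefore $\omega(F')=\omega(F[A])=|K|$; thus $K$ would be a max clique of $F'$ lying in $P_i$, again impossible. So the glued partition works, and note the alignment of the two partitions is irrelevant.

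The part I expect to be the main obstacle is the bookkeeping for the two degenerate subcases, where one of the auxiliary graphs has no edge and the hypothesis cannot be applied to it. If $F[B]$ is edgeless, then $\omega(F[B])=1$ makes $\omega(F')=\omega(F)$, one checks $F'$ still has an edge, and I would place all of $B$ into the part $P_1$ holding $h$; the point to verify is that any max clique of $F$ through a vertex $b\in B$ can be rerouted as $(K\setminus\{b\})\cup\{h\}$ into a max clique of $F'$ inside $P_1$. If instead $F'$ is edgeless, then $A$ is stable and anticomplete to $B$, so $F$ is a disjoint union of a stable set with $F[B]$, giving $\omega(F)=\omega(F[B])$, and I would append $A$ to one class of the $B$-partition. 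Finally I would observe that $F[B]$ and $F'$ cannot both be edgeless, since that would leave $F$ with no edge; assembling the cases yields the lemma.
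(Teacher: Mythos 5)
Note first that the paper does not prove this lemma at all: it is imported from Ho\`ang and McDiarmid (\cite{HoaMcd2002}, Lemma 3) and stated with a closing box, so there is no in-paper argument to compare yours against; your proposal therefore has to stand on its own, and it does. The reduction to the main case is clean (if $|B|\le 1$ then $F$ embeds in $G'=G[(V(G)-H)\cup\{h\}]$ by identifying the lone $B$-vertex with $h$, which is legitimate precisely because $H$ is homogeneous; if $A=\emptyset$ then $F$ sits inside $H$). In the main case your two clique-number identities $\omega(F)=\max\bigl(\omega(F[A]),\,\omega(F[N])+\omega(F[B])\bigr)$ and $\omega(F')=\max\bigl(\omega(F[A]),\,1+\omega(F[N])\bigr)$ are correct, and the gluing $R_i=(P_i\setminus\{h\})\cup Q_i$ does exactly what you claim: if a largest clique $K$ of $F$ meets $B$, the counting argument forces $|K\cap B|=\omega(F[B])$ with $K\cap B\subseteq Q_i$, contradicting the choice of the $Q$'s, and if $K\subseteq A$, the edge in $F[B]$ gives $\omega(F[B])\ge 2$, hence $\omega(F')=\omega(F[A])=|K|$ and $K\subseteq P_i$ contradicts the choice of the $P$'s; as you say, no alignment between the two partitions is needed. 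The degenerate cases are also handled correctly: when $F[B]$ is edgeless, $F'$ inherits an edge from $F$ (any $F$-edge meeting $B$ has its $A$-endpoint in $N$, which is adjacent to $h$ in $F'$), and the rerouting $(K\setminus\{b\})\cup\{h\}$ works because $K\setminus\{b\}\subseteq N$ and $h$ is complete to $N$ in $F'$, while largest cliques inside $A$ are blocked by the $P_i$ directly; when $F'$ is edgeless, $F$ is a disjoint union of a stable set and $F[B]$; and both cannot be edgeless since $F$ has an edge. So your write-up is a correct, self-contained proof that could replace the citation; it is, in substance, the substitution-and-glue argument one would expect the cited reference to contain, phrased through the shrunken graph $F'=F[A\cup\{h\}]$ rather than through a quotient, and its only cost is the explicit bookkeeping of the edgeless subcases, which the definition of $k$-divisibility (applying only to graphs with at least one edge) makes unavoidable.
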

\begin{theorem}[\cite{HoaMcd2002}, Theorem 1] \label{thm:claw-free-2div}
A (claw, odd hole)-free graph is 2-divisible. \qed
\end{theorem}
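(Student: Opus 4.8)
The plan is to prove the statement by induction on the number of vertices, reducing to the smallest possible graphs through two structural reductions and then handling two base cases dictated by the independence number. Throughout I use that the class of (claw, odd hole)-free graphs is hereditary, so every induced subgraph I produce stays in the class and the inductive hypothesis applies to it; this also means that once I exhibit, for each such graph, a single partition of its vertex set into two parts neither of which contains a largest clique, the hereditary requirement in the definition of $2$-divisibility is automatically satisfied.

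First I would dispose of disconnected graphs: if $G$ is disconnected, each component is a smaller (claw, odd hole)-free graph and hence $2$-divisible by induction, and a disjoint union of $2$-divisible graphs is easily seen to be $2$-divisible, since a largest clique lies inside a single component, where that component's own partition avoids it. So assume $G$ is connected. If $\alpha(G) \geq 3$, I claim $G$ is perfect: were $G$ to contain an odd antihole, Ben Rebea's Lemma (Lemma~\ref{lem:benrebea}) would force a $C_5$, which is itself an odd hole and therefore forbidden; hence $G$ has neither an odd hole nor an odd antihole, and the Strong Perfect Graph Theorem gives perfection. A perfect graph is $2$-divisible: fixing an $\omega(G)$-colouring, let one part be a single colour class (a stable set, containing no clique of size $\omega(G) \geq 2$) and the other part the union of the rest (coloured with $\omega(G)-1$ colours, so its clique number is at most $\omega(G)-1$). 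This settles the case $\alpha(G) \geq 3$.

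The remaining, and principal, case is $\alpha(G) \leq 2$; note such a graph is automatically claw-free, since a claw contains a stable set of size three. Here I would again induct. If $G$ has a homogeneous set $H$, then both $H$ and $G[(V(G)-H)\cup\{h\}]$ are smaller graphs with independence number at most two and no odd hole, hence $2$-divisible by induction, and the substitution lemma (Lemma~\ref{lem:substitution}) with $k=2$ lifts this to $G$. So I may assume $G$ is prime. If $G$ is moreover perfect we are done as above; otherwise, by the Strong Perfect Graph Theorem together with the absence of odd holes, $G$ contains an odd antihole (the smallest relevant example being $\overline{C_7}$, which is prime, imperfect, and odd-hole-free, yet admits the partition $\{v_0,v_1,v_2\} \mid \{v_3,v_4,v_5,v_6\}$ into two triangle-free blocks).

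The hard part will be exactly this last situation: producing the required partition of a prime, imperfect graph with $\alpha(G) \leq 2$ and no odd hole. My approach would be to pass to the complement $\overline{G}$, which is triangle-free and free of odd antiholes, so that the goal becomes splitting $\overline{G}$ into two parts each of independence number strictly below $\alpha(\overline{G}) = \omega(G)$. I would seek this partition by starting from the natural two-block split of an odd antihole contained in $G$ and extending it across the remaining vertices, using the exclusion of $C_5$ and the triangle-freeness of $\overline{G}$ to control how an outside vertex attaches to a largest clique. Verifying that such an extension never concentrates a largest clique inside one block is where the genuine combinatorial work lies, and it is the step I expect to require the most care; the constraint $\alpha(G) \le 2$ and the forbidden subgraphs should, however, restrict the adjacency patterns enough to keep the extension feasible at every stage.
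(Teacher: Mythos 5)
Your outer shell is sound, and it is worth saying up front that the paper itself contains no proof of this statement: it is quoted from \cite{HoaMcd2002}, and in fact the paper invokes that citation for exactly one purpose --- in the proof of Theorem~\ref{thm:2-divisible} it is used to dispose of the case $\alpha(G)\le 2$. Your reductions are all correct: the component reduction, the homogeneous-set reduction via Lemma~\ref{lem:substitution}, the case $\alpha(G)\ge 3$ (where Lemma~\ref{lem:benrebea} excludes odd antiholes, the Strong Perfect Graph Theorem gives perfection, and the colour-class split shows perfect graphs are $2$-divisible), and the observation that hereditariness of the class lets you verify only one partition per graph.

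The problem is that after these reductions the case you are left with --- $G$ prime, imperfect, odd-hole-free, $\alpha(G)\le 2$, hence (co-triangle, $C_5$)-free and containing an odd antihole $\overline{C_{2k+1}}$ with $k\ge 3$ --- is precisely the substance of Theorem 1 of \cite{HoaMcd2002} beyond Ben Rebea's Lemma, and you do not prove it: your final paragraph is a statement of intent (``verifying that such an extension never concentrates a largest clique inside one block is where the genuine combinatorial work lies''), not an argument. This case has real content. For instance, the natural first attempt --- pick a vertex $x$ and split $V(G)$ into $V_1=\{x\}\cup(V(G)\setminus N[x])$ and $V_2=N(x)$, noting that $V(G)\setminus N[x]$ is a clique because $\alpha(G)\le 2$ and that $\omega(G[N(x)])\le\omega(G)-1$ --- succeeds unless $V(G)\setminus N[x]$ is itself a maximum clique, so it fails exactly on the family of graphs in which every vertex's non-neighbourhood is a maximum clique, and that residual family still needs a dedicated argument. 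Likewise, extending the two-block split of the antihole across the rest of the graph requires attachment lemmas (how an outside vertex meets the antihole, which block it must join, why the choices are consistent) of the same kind as the claims proved in Lemmas~\ref{lem:triangle-free-component} and~\ref{lem:co-triangle} for the banner-free analogue; you supply none of them. Until that core case is carried out, the proposal establishes the theorem only for graphs that are perfect, decomposable, or of independence number at least $3$, which is strictly less than what is claimed.
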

\begin{theorem}\label{thm:2-divisible}
A (banner, odd hole)-free graph is 2-divisible.
\end{theorem}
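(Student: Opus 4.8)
The plan is to argue by induction on $|V(G)|$, using the trichotomy of Theorem~\ref{thm:main} to split into three cases, two of which are handled directly and one by the substitution Lemma~\ref{lem:substitution}.

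First I would dispose of the two ``terminal'' cases. If $G$ is perfect, then $G$ is $2$-divisible, since perfect graphs are $2$-divisible (as already noted at the start of this section). The key observation for the second case is that a claw $K_{1,3}$ contains a stable set of size three among its leaves; hence any graph with $\alpha(G)\le 2$ is automatically claw-free. Therefore, if $\alpha(G)\le 2$, then $G$ is a (claw, odd hole)-free graph, and Theorem~\ref{thm:claw-free-2div} immediately gives that $G$ is $2$-divisible. Neither of these two cases requires the induction hypothesis.

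It remains to treat the case where $G$ is neither perfect nor satisfies $\alpha(G)\le 2$. Here I would invoke the Strong Perfect Graph Theorem: since $G$ is odd-hole-free but not perfect, $G$ must contain an odd antihole $A$. Because $\alpha(G)\ge 3$ and $G$ is not perfect, parts (i) and (ii) of Theorem~\ref{thm:main} fail, so part (iii) applies and $A$ lies inside a homogeneous set $H$ of $G$; in particular $H$ is a genuine homogeneous set with $2\le |H| < |V(G)|$. Both $G[H]$ and $G[(V(G)-H)\cup\{h\}]$ (for any $h\in H$) are induced subgraphs of $G$ on strictly fewer vertices, and since the class of (banner, odd hole)-free graphs is hereditary, both are again (banner, odd hole)-free. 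By the induction hypothesis each of them is $2$-divisible, and Lemma~\ref{lem:substitution} (with $k=2$) then yields that $G$ is $2$-divisible, completing the induction.

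The only genuinely new ingredient is the elementary remark that $\alpha(G)\le 2$ forces claw-freeness, which converts case (ii) of Theorem~\ref{thm:main} into a direct application of the known claw-free result; the remaining steps are routine. I expect no real obstacle, and the single point to verify carefully is that when $G$ is non-perfect with $\alpha(G)\ge 3$ an odd antihole genuinely exists, so that Theorem~\ref{thm:main}(iii) actually produces a homogeneous set rather than holding vacuously --- which is exactly what the Strong Perfect Graph Theorem guarantees.
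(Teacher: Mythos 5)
Your proof is correct and follows essentially the same route as the paper: induction on the number of vertices, handling the perfect case directly, reducing the $\alpha(G)\le 2$ case to Theorem~\ref{thm:claw-free-2div} via claw-freeness, and otherwise combining Theorem~\ref{thm:main} with Lemma~\ref{lem:substitution}. The only difference is that you spell out two steps the paper leaves implicit --- that $\alpha(G)\le 2$ forces claw-freeness, and that the Strong Perfect Graph Theorem guarantees an odd antihole so that case (iii) of Theorem~\ref{thm:main} is non-vacuous --- which is a welcome clarification rather than a deviation.
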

\begin{proof}
We prove the theorem by induction on the number of vertices. Let
$G$ be a (banner, odd hole)-free graph. By the induction
hypothesis, we may assume every proper induced subgraph of $G$ is
2-divisible. We may assume that $G$ is not perfect, and by
Theorem~\ref{thm:claw-free-2div}, $G$ has $\alpha(G) \geq 3$. By the Strong Perfect Graph Theorem,
$G$ contains an odd antithole with at least seven vertices.  By
Theorem~\ref{thm:main}, $G$ contains a homogeneous set $H$. By the
induction hypothesis, both  $H$ and $G[(V(G) - H) \cup \{h\}]$
(for any vertex $h \in H$) are 2-divisible. Now by
Lemma~\ref{lem:substitution}, $G$ is 2-divisible.
\end{proof}
As noted in Section~\ref{sec:introduction},
Theorem~\ref{thm:2-divisible} implies that a (banner, odd
hole)-free graph $G$ has $\chi(G) \leq 2^{\omega(G) -1}$. In
Section~\ref{sec:perfect-divisibility}, we will obtain a better
bound on the chromatic number of such graphs. 
We note the result of Scott and Seymour (\cite{ScoSey2015})
showing that an odd-hole-free graph $G$ has $\chi(G) \leq
2^{2^{\omega(G)+2}}$.
\section{The perfect divisibility
of (banner, odd hole)-free graphs}\label{sec:perfect-divisibility}
Recall that a graph $G$ is {\it perfectly divisible} if every induced
subgraph $H$ of $G$ contains a set $X$ of vertices such that $X$
meets all largest cliques of $H$, and $X$ induces a perfect graph.
Such a set $X$ will be called a {\it compact} set.  We start with
the following easy observation.
\begin{observation}\label{obs:PD-chi-bounded}
A perfectly divisible graph $G$ satisfies $\chi(G) \leq \omega(G)^2$.
\qed
\end{observation}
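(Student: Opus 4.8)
The plan is to prove the bound by induction on $\omega(G)$, peeling off a compact set at each step. The base case is $\omega(G) \le 1$: then $G$ has no edges, so $\chi(G) = \omega(G) \le \omega(G)^2$.

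For the inductive step I would first record the (immediate but essential) fact that the class of perfectly divisible graphs is hereditary: since perfect divisibility asks that \emph{every} induced subgraph contain a compact set, every induced subgraph of a perfectly divisible graph is again perfectly divisible. Now let $\omega = \omega(G) \ge 2$. Applying the definition of perfect divisibility to $H = G$ itself yields a compact set $X$, that is, a set meeting all largest cliques of $G$ such that $G[X]$ is perfect. Because $G[X]$ is perfect we have $\chi(G[X]) = \omega(G[X]) \le \omega$, so $G[X]$ can be properly colored with at most $\omega$ colors. Because $X$ meets every clique of size $\omega$, the graph $G - X$ contains no such clique, whence $\omega(G - X) \le \omega - 1$.

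Next I would apply the induction hypothesis to $G - X$, which is perfectly divisible by the hereditary remark and has strictly smaller clique number; this gives $\chi(G - X) \le \omega(G - X)^2 \le (\omega - 1)^2$. Coloring $G[X]$ and $G - X$ with two disjoint palettes produces a proper coloring of $G$ (an edge inside $X$ or inside $G - X$ is handled by the respective coloring, and an edge across the cut has endpoints drawn from disjoint palettes), so
\[
\chi(G) \le \chi(G[X]) + \chi(G - X) \le \omega + (\omega - 1)^2 = \omega^2 - \omega + 1 \le \omega^2 .
\]
This completes the induction and in fact yields the slightly sharper bound $\chi(G) \le \omega^2 - \omega + 1$.

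There is no real obstacle here: the only points needing care are the hereditary closure of the class (so that induction may legitimately be applied to $G - X$) and the observation that a compact set strictly drops the clique number in the remainder, both of which follow directly from the definitions. The mild arithmetic identity $\omega + (\omega - 1)^2 = \omega^2 - \omega + 1$ is what makes the telescoping stay within the $\omega^2$ budget.
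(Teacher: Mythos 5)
Your proof is correct, and it is precisely the standard argument the paper has in mind when it states this as an ``easy observation'' with the proof omitted: peel off a compact set $X$, color $G[X]$ with at most $\omega$ colors by perfection, note that $\omega(G-X)\leq\omega-1$, and induct on the clique number using the fact that perfect divisibility is hereditary. Your slightly sharper bound $\chi(G)\leq\omega^2-\omega+1$ is a legitimate bonus (and iterating the peeling even gives $\binom{\omega+1}{2}$), but nothing beyond the stated $\omega^2$ is needed.
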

In this section, we will prove (banner, odd hole)-free graphs are
perfectly divisible. Let us say that a graph $G$ is {\it minimally
non-perfectly divisible} if $G$ is not perfectly divisible but
every proper induced subgraph of $G$ is.
\begin{observation}\label{obs:MNPD-homogeneous}
No minimal non-perfectly divisible graph contains a homogeneous set. \qed
\end{observation}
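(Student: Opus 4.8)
The plan is to argue by contradiction. Suppose $G$ is minimally non-perfectly divisible and contains a homogeneous set $H$, and fix $h \in H$. Since $2 \le |H| < |V(G)|$, both $H$ and $G_2 = G[(V(G)-H)\cup\{h\}]$ are \emph{proper} induced subgraphs of $G$, and so is $G-H$; by minimality each of them is perfectly divisible and hence possesses a compact set. Crucially, minimality tells us that every proper induced subgraph of $G$ already has a compact set, so to reach a contradiction it suffices to exhibit a \emph{single} compact set $X$ of $G$ itself. To organize the cliques, partition $V(G)-H$ into the set $\Gamma$ of vertices complete to $H$ and the set $\Delta$ of vertices anticomplete to $H$; since $\Delta$ is anticomplete to $H$, no clique meets both $H$ and $\Delta$.

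With this partition one checks that $\omega(G) = \max(\omega(H)+\omega(\Gamma),\,\omega(G-H))$, and that every maximum clique of $G$ is of one of two kinds: a \emph{type-1} clique, which lies in $H \cup \Gamma$ and whose intersection with $H$ is a maximum clique of $H$; or a \emph{type-2} clique, which is contained in $G-H$. The gluing tool is the substitution lemma for perfect graphs (Lov\'asz \cite{Lov1972}): for any $X_H \subseteq H$ and $Y \subseteq V(G)-H$, the graph $G[X_H \cup Y]$ is the substitution of $H[X_H]$ into $G[\{h\}\cup Y]$, so it is perfect if and only if both $H[X_H]$ and the quotient $G[\{h\}\cup Y]$ are perfect. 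Taking $X_H$ to be a compact set of $H$ automatically meets the $H$-part, hence all of, the type-1 maximum cliques. This already disposes of the two easy cases: if all maximum cliques are type-1 we may take $X = X_H$ (so $Y = \emptyset$), and if all maximum cliques are type-2 we may take $X$ to be a compact set of $G-H$ (so no vertex of $H$ is needed).

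The remaining, and genuinely delicate, case is when \emph{both} types of maximum clique occur, i.e. $\omega(H)+\omega(\Gamma) = \omega(G-H) = \omega(G)$. Here $X$ must simultaneously meet the type-2 maximum cliques through a set $Y \subseteq V(G)-H$ while keeping the quotient $G[\{h\}\cup Y]$ perfect. The natural candidate — a compact set of $G-H$, which meets every type-2 maximum clique and induces a perfect graph — need not survive the reattachment of the module representative $h$: adding $h$ back to a perfect graph can create an odd hole or odd antihole through $h$. \textbf{This reattachment step is the main obstacle}, and it is exactly the point where contracting $H$ all the way to the single vertex $h$ loses information (when $\omega(H)\ge 2$ the cliques $\{h\}\cup D$ representing type-1 cliques are not maximum in $G_2$, so a compact set of $G_2$ is free to omit $h$).

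To overcome this I would avoid the single-vertex contraction and instead work with the induced subgraph $G_3 = G[(V(G)-H)\cup C]$, where $C$ is a maximum clique of $H$; this is a proper induced subgraph of $G$ precisely when $H$ is not a clique, hence perfectly divisible. In $G_3$ the $H$-part is the full clique $C$ of size $\omega(H)$, so $\omega(G_3)=\omega(G)$ and the type-1 maximum cliques take the visible form $C \cup D$ with $D$ a maximum clique of $\Gamma$. I would take a compact set $X_3$ of $G_3$ and argue, using that $C$ is a module and that any odd hole or odd antihole meets a module in at most one vertex (or is contained in it), that $X_3$ may be chosen to meet $C$; one may then collapse the clique $X_3 \cap C$ to a single representative, obtaining an induced subgraph of the perfect graph $G_3[X_3]$, and substitute $H[X_H]$ into it to produce, by Lov\'asz's substitution lemma, a perfect set $X = (X_3 \setminus C)\cup X_H$ that meets all type-1 and type-2 maximum cliques of $G$. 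The case in which $H$ is itself a clique would be treated separately by a direct blow-down argument on $G_2$. Producing such an $X$ contradicts the assumption that $G$ has no compact set, establishing Observation~\ref{obs:MNPD-homogeneous}; I expect essentially all of the work to be concentrated in justifying that the module representative can be kept inside a perfect, clique-meeting set.
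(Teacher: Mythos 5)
You have correctly isolated where the real difficulty lies: reattaching the module representative to a perfect set is exactly the step that needs an argument. It is worth noting that this is also the step that the paper's own, much shorter proof treats loosely: the paper takes a compact set $C_1$ of $G_2=G[(V(G)-H)\cup\{h\}]$ and a compact set $C_2$ of $H$, sets $C=(C_1-h)\cup C_2$, and declares $G[C]$ perfect ``by the Substitution Lemma'' --- an appeal that is only licit when $h\in C_1$ (if $h\notin C_1$, the quotient $G[C_1\cup\{h\}]$ is not an induced subgraph of the perfect graph $G_2[C_1]$; e.g.\ $C_1$ could be the $P_4$ of a $C_5$ through $h$, and then $C_1\cup\{h\}$ induces an odd hole). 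Your easy cases are correct, and so is your collapse-and-substitute argument in the subcase $X_3\cap C\neq\emptyset$. The first genuine gap is the step your hard case rests on: ``argue \ldots that $X_3$ may be chosen to meet $C$.'' The tool you invoke (a hole or antihole meets a module in at most one vertex) is powerless here, because when $X_3\cap C=\emptyset$ the set $X_3\cup\{c\}$ contains only \emph{one} vertex of $C$, so that fact places no obstruction on an odd hole through $c$; and nothing forces some compact set of $G_3$ to meet $C$. Fortunately, this case needs no repair at all: if $X_3\cap C=\emptyset$, then for every maximum clique $K$ of $G[\Gamma]$ the set $C\cup K$ is a maximum clique of $G_3$ (its size is $\omega(H)+\omega(G[\Gamma])=\omega(G)=\omega(G_3)$), so $X_3$ must meet $K$ itself; since in this case every type-1 maximum clique of $G$ has a maximum clique of $G[\Gamma]$ as its $\Gamma$-part, $X_3$ already meets all type-1 and type-2 maximum cliques of $G$, i.e.\ $X_3$ is itself a compact set of $G$. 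The correct dichotomy is therefore: either $X_3$ meets $C$ (your substitution argument applies), or it misses $C$ (and is already compact in $G$) --- no ``choosing'' is needed or possible.

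The second gap is the clique case, which you wave off as ``a direct blow-down argument on $G_2$''; in fact it is the hardest case, and your own obstacle recurs there unresolved. Suppose $H$ is a clique, $|H|\ge 2$, and both types of maximum clique attain $\omega(G)$. In $G_2$, every clique through $h$ has size at most $1+\omega(G[\Gamma])=\omega(G)-|H|+1<\omega(G)=\omega(G_2)$, so the type-1 cliques are invisible: a compact set of $G_2$ need not contain $h$, need not meet any maximum clique of $G[\Gamma]$, and reattaching $h$ to it is precisely the perfection problem you flagged. Moreover, the device that saved the non-clique case is unavailable: the maximum clique of $H$ is $H$ itself (so $G_3=G$ is not a proper induced subgraph), and deleting even one vertex of $H$ makes every type-1 clique sub-maximum, hence invisible again. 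So, as written, your proposal does not prove the observation; a genuinely different idea is needed when every homogeneous set in play is a clique. (The same unaddressed case is the weak point of the paper's proof, which never distinguishes $h\in C_1$ from $h\notin C_1$; so your instinct about where the work lies is sound, but neither your argument nor a routine patch of it closes that case.)
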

We will prove a statement more general than Observation~\ref{obs:MNPD-homogeneous}. In order to do this, we will need to work with weighted graphs.  Consider
the following two optimization problems.

\noindent {\bf Minimum Weighted Coloring (WCOL)} Given a weighted graph
$G$ such that each vertex $x$ has a weight $w(x)$ which is a
positive integer. Find stable sets $S_1,S_2,\ldots,S_k$ and
integers $I(S_1), \ldots, I(S_k)$ such that for each vertex $x$ we
have $w(x) \leq \Sigma_{x \in S_i}I(S_i)$ and that the sum of the
numbers $I(S_i)$ is minimized. This sum is called the weighted
chromatic number and denoted by $\chi_w(G)$.

\noindent {\bf Maximum Weighted Clique (WCLI)} Given a weighted graph $G$
such that each vertex $x$ has a weight $w(x)$ which is a positive
integer. Find a clique $C$ such that $\Sigma_{x \in C}w(x)$ is
maximized. This sum is called the weighted clique number and
denoted by $\omega_w(G)$.

For perfect (weighted) graph $G$, it is known that $\chi_w(G) = \omega_w(G)$. This is a consequence of 
the Substitution Lemma (Lemma~\ref{lem:substitution} below) proved by Lov\'asz \cite{Lov1972}. He proved the following: Consider a perfect graph $G$ and a vertex $x$ of $G$. Then the graph obtained from $G$ by substituting another perfect graph  $H$ for $x$ is perfect. We will rephrase the Substitution Lemma as follows.  
\begin{lemma}\label{lem:substitution}
	No minimal imperfect graph contains a homogeneous set.
\end{lemma}

The reader may see an analogy between Observation~\ref{obs:MNPD-homogeneous} and Lemma~\ref{lem:substitution}. 
An unweighted graph can be turned into a weighted graph by assigning to each vertex a weight of one. Thus,
WCOL generalizes the coloring problem, and WCLI generalizes the clique problem. We may define the notion of perfect divisibility for weighted graphs. 
A weighted graph $G$ is {\it perfectly divisible} if each induced subgraph $H$ of $G$ contains a set 
$X$ such that $X$ meets all maximum weighted cliques of $H$ and $X$ induces a perfect graph. We may define analogously the notions of ``compact set'' and ``minimal non-perfectly divisible'' for weighted graphs. The following observation implies Observation~\ref{obs:MNPD-homogeneous}.
\begin{observation}\label{obs:weighted-homogeneous}
Let $G$ be a weighted graph. If $G$ is minimally non-perfectly divisible, then $G$ cannot contain a homogeneous set. 
\end{observation}
{\it Proof}. Let $G$ be a weighted graph. Suppose $G$ is minimally non-perfectly divisible, and contains a homogeneous set $H$. Let $G_h$ be the graph obtained
from $G$ by substituting a vertex $h$ for $H$  and  setting the weight of $h$ to $w(h) = \omega_w(H)$. (Thus $G_h$ is isomorphic to $G[V(G) - (H-v)]$ for any vertex $v \in H$.) It is easy to see 
that $\omega_w(G) = \omega_w(G_h)$. 

The minimality of $G$
implies that $G_h$ contains a compact set $C_1$ and $H$ contains a
compact set $C_2$. Suppose first that $h \in C_1$.
Define $C = (C_1 - h) \cup C_2$. Since both
$C_1$ and $C_2$ induce a perfect graph,  the graph $G[C]$ is
perfect by Lemma~\ref{lem:substitution} (if $|C_2| = 1$, then $G[C]$ is a proper induced subgraph of $G$, otherwise, $C_2$ is a homogeneous set of $G[C]$).
We will show that $C$ is compact.  Consider a largest weighted clique $K$ of $G$. If $K$
contains no vertex of $H$, then $K$ lies entirely in $G_h -h$. Since in $G_h$, $C_1 - h$ meets $K$, we know that $K$ is met by $C$. 
If $K$ contains some vertex of $H$,
then $K \cap H$ is a maximum weighted clique of $H$ and thus is met by
$C_2$. So, $C$ is compact, a contradiction to the minimality of
$G$. 

Now, we may assume that $h \not\in C_1$. We claim that $C_1$ meets all maximum weighted cliques of 
$G$, and thus is compact. Suppose $G$ has a maximum weighted clique $K$ such that $K \cap C_1 = \emptyset$. 
Then $K$ must necessarily contain some vertices of $H$, for otherwise $K$ lies entirely in $G_h -h$ and is met by $C_1$. Let $N$ be the
set of vertices in $G-H$ that have a neighbour in $H$.
Write $K_H = K \cap H$ and $K_N = K \cap N$. Note that $K = K_N \cup K_H$. Since the weight of $h$ is at least the weight of the clique $K_H$ (ie. $w(h) \geq \sum_{x \in K_H} w(x)$), $K_N \cup \{h\}$ is a maximum weighted clique of $G_h$, but is not met by $C_1$, a contradiction.
\qed  
\begin{observation}\label{obs:PD-alpha}
If a graph $G$ contains no co-triangles, then $G$ is perfectly
divisible.
\end{observation}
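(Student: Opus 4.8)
The plan is to unwind the definitions. A co-triangle is the complement of a triangle, i.e.\ a stable set on three vertices, so ``$G$ contains no co-triangle'' is exactly the condition $\alpha(G) \le 2$. This condition is hereditary, so every induced subgraph $H$ of $G$ also satisfies $\alpha(H) \le 2$; it therefore suffices to prove that \emph{any} graph with $\alpha \le 2$ contains a compact set, and then apply this to each induced subgraph $H$ in turn. The one structural fact I would isolate first is that if $\alpha(G) \le 2$ then, for every vertex $u$, the set $Q_u$ of non-neighbours of $u$ is a clique: two non-adjacent vertices in $Q_u$ together with $u$ would form a stable set of size three.

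With this in hand the compact set is immediate to write down. I would fix an arbitrary vertex $u$ and set $X = \{u\} \cup Q_u$, where $Q_u$ is the (clique) set of non-neighbours of $u$. Since $u$ is non-adjacent to every vertex of $Q_u$ and $Q_u$ is a clique, $G[X]$ is the disjoint union of a clique and the isolated vertex $u$; such a graph is trivially perfect (it is chordal, its only induced cycles being triangles), so the ``perfect'' half of compactness holds for free.

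The second thing to check is that $X$ meets every largest clique of $G$, and this is where maximality does the work: if $K$ is a largest clique with $u \notin K$ and $K \cap Q_u = \emptyset$, then every vertex of $K$ is adjacent to $u$, so $K \cup \{u\}$ is a clique of size $\omega(G)+1$, contradicting $|K| = \omega(G)$. Hence every largest clique either contains $u$ or meets $Q_u$, so $X \cap K \ne \emptyset$ in all cases and $X$ is compact. Applying this construction inside each induced subgraph $H$ (using $\alpha(H) \le 2$) shows $G$ is perfectly divisible.

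I do not expect a genuine obstacle here: the statement is an observation, and the argument is a couple of lines once the reformulation $\alpha(G)\le 2$ and the ``non-neighbours form a clique'' fact are noted. The only points that require a moment's care are the hereditary reduction (so that the single construction certifies perfect divisibility, and not merely the existence of one compact set in $G$) and the degenerate case in which $u$ is universal, where $Q_u = \emptyset$, $X = \{u\}$, and $u$ lies in every largest clique. Equivalently, one may phrase the same argument in the complement $\overline{G}$, which is triangle-free: there $X$ is the closed neighbourhood $N[u]$, which induces a star (hence bipartite, hence perfect in $G$) and meets every maximum stable set of $\overline{G}$ by maximality.
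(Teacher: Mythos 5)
Your proposal is correct and follows essentially the same approach as the paper: both pick an arbitrary vertex, observe that its non-neighbours form a clique (by the no-co-triangle hypothesis), and take that clique together with the chosen vertex as the compact set, with the hereditary reduction (phrased in the paper as induction on the number of vertices) handling the passage to induced subgraphs. Your version merely spells out the perfection and clique-meeting checks that the paper leaves implicit.
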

{\it Proof}.  We will prove the Observation by induction on the
number of vertices. Let $G$ be a graph without co-triangles. By
the induction hypothesis, we only need prove $G$ contains a
compact set. Consider a vertex $x$ of $G$. Let $N$ be the set of
vertices adjacent to $x$ and let $M= V(G)-N-\{x\}$. Since $G$
contains no co-triangles, $M$ is a clique. Write $C= M \cup
\{x\}$. Then $C$ is perfect and meets all largest cliques of $G$.
\qed
\begin{theorem}\label{thm:PD-banner}
(Banner, odd hole)-free graphs are perfectly divisible.
\end{theorem}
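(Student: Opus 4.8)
The plan is to argue by minimal counterexample, pitting Theorem~\ref{thm:main} against the two structural facts already in hand, Observations~\ref{obs:MNPD-homogeneous} and~\ref{obs:PD-alpha}. Since the class of (banner, odd hole)-free graphs is closed under taking induced subgraphs, if the theorem fails there is a minimally non-perfectly divisible graph $G$ in the class: $G$ is not perfectly divisible but every proper induced subgraph of $G$ is. I would first record the trivial fact that every perfect graph is perfectly divisible, since for each induced subgraph $H$ one may take the compact set $X = V(H)$, which meets all largest cliques and induces a perfect graph. Thus it suffices to show that this minimal $G$ is either perfect or directly contains a compact set, in order to contradict the choice of $G$.

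By Observation~\ref{obs:MNPD-homogeneous}, the minimal counterexample $G$ has no homogeneous set. I would then apply Theorem~\ref{thm:main} to $G$ and dispose of its three outcomes in turn. If (i) holds, then $G$ is perfect, hence perfectly divisible, a contradiction. If (ii) holds, then $\alpha(G) \leq 2$, which is exactly the statement that $G$ has no co-triangle (no stable set on three vertices); Observation~\ref{obs:PD-alpha} then yields that $G$ is perfectly divisible, again a contradiction. Finally, if (iii) holds, then every odd antihole of $G$ lies inside a homogeneous set of $G$; but $G$ has no homogeneous set, so $G$ has no odd antihole at all. Being (banner, odd hole)-free, $G$ is then both odd-hole-free and odd-antihole-free, so by the Strong Perfect Graph Theorem $G$ is perfect, returning us to case (i). In every branch $G$ turns out to be perfectly divisible, contradicting its choice, and the theorem follows.

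The work here is almost entirely organizational rather than combinatorial: each branch of Theorem~\ref{thm:main} has been deliberately arranged to feed into exactly one prepared tool, so no fresh argument about banners or antiholes is required. The only points deserving care are the two matchings of hypotheses, namely that $\alpha(G)\le 2$ is literally the ``no co-triangle'' hypothesis of Observation~\ref{obs:PD-alpha}, and that ``every odd antihole lies in a homogeneous set'' collapses, once homogeneous sets are forbidden by Observation~\ref{obs:MNPD-homogeneous}, to ``$G$ has no odd antihole,'' which is precisely the last ingredient the Strong Perfect Graph Theorem needs. I would close by invoking Observation~\ref{obs:PD-chi-bounded} to extract the consequence $\chi(G)\le\omega(G)^2$, improving the $2^{\omega-1}$ bound obtained in Section~\ref{sec:divisibility}.
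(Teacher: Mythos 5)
Your proposal is correct and takes essentially the same route as the paper's own proof: a minimal-counterexample argument combining Theorem~\ref{thm:main} with Observation~\ref{obs:MNPD-homogeneous}, Observation~\ref{obs:PD-alpha}, and the Strong Perfect Graph Theorem. The only difference is organizational --- the paper derives the existence of a homogeneous set and then contradicts Observation~\ref{obs:MNPD-homogeneous}, whereas you invoke that observation first and dispose of case (iii) by deducing that $G$ is perfect; the two arguments are contrapositives of the same reasoning.
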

{\it Proof}. Let $G$ be a (banner, odd hole)-free graph. We only
need prove $G$ is not minimally non-perfectly divisible. Suppose
$G$ is minimally non-perfectly divisible. We may assume $G$ is not
perfect, for otherwise we are done.  By
Observations~\ref{obs:PD-alpha}, we may assume $G$ contains a
co-triangle. By Theorem~\ref{thm:main} and the Strong Perfect Graph Theorem, $G$ contains a homogeneous
set. By Observation~\ref{obs:MNPD-homogeneous}, $G$ cannot be
minimally non-perfectly divisible, a contradiction. \qed

Theorem~\ref{thm:PD-banner} and Observation~\ref{obs:PD-chi-bounded} implies the following
corollary
\begin{corollary}
If $G$ is (banner, odd hole)-free graph, then $\chi(G) \leq \omega(G)^2$. \qed
\end{corollary}

Chudnovsky, Robertson, Seymour, and Thomas \cite{ChuRob2010}
proved that ($K_4$, odd hole)-free graphs are 4-colorable. It is
easily seen from this result that ($K_4$, odd hole)-free graphs
are perfectly divisible. (If $G$ is ($K_4$, odd hole)-free but imperfect, 
then $\chi(G) \leq 4 $ and $\omega(G) = 3$. Let $S_1, S_2, S_3, S_4$ be the color classes
of some 4-coloring of $G$. Then $S_1 \cup S_2$ is a compact set).  
Theorem~\ref{thm:PD-banner}  and the
result of Chudnovsky et al. suggest that perfectly divisible
graphs should be studied further. We note that recognizing perfectly divisible graphs is NP-complete.
\begin{lemma}\label{lem:np-hard}
	It is NP-complete to recognize perfectly divisible graphs. 
\end{lemma}
{\it Proof}. 
It is proved by Maffray and Priessmann \cite{MafPri1996} that it is NP-complete to recognize 3-colorable triangle-free graphs. We will reduce recognizing 3-colorable triangle-free graphs to recognizing perfectly divisible graphs. 
Let $G$ be a triangle-free graph. We may assume $G$ has an edge. We will show $G$ is 3-colorable if and only if $G$ is perfectly divisible. The ``only if'' part is trivial. For the ``if'' part, suppose $G$ is perfectly divisible and thus contains a compact set $C$. Since $G[G]$ is perfect and triangle-free, it is bipartite. Since $\omega(G) = 2$,   $G-C$ is a stable set. Thus, $G$ is 3-colorable.
 \qed

\section{Conclusions and open problems}
In this paper, we study the structure of (banner, odd hole)-free graphs. Our structure results show the existence of polynomial-time algorithms for recognizing, and finding a minimum coloring and a largest stable set of, (banner, odd hole)-free graphs. Actually, our Theorem~\ref{thm:main} implies the following: if WCOL for graphs with $\alpha = 2$ can be solved in polynomial time, then so can WCOL for   (banner, odd hole)-free graphs. We would like to propose this as an open problem.
\begin{problem}
	Determine the complexity of finding a minimum weighted coloring for graphs $G$ with $\alpha(G) = 2$.
\end{problem}
We have shown that (banner, odd hole)-free graphs are perfectly divisible. This result shows $\chi(G) \leq \omega(G)^2$ for a (banner, odd hole)-free graph $G$. We do not know of an odd-hole-free graph $G$ with $\chi(G) = \Omega(\omega(G)^2)$. 
We conclude our paper with the following problem. 
\begin{problem}
	Find an odd-hole-free graph that is not perfectly divisible.
\end{problem}
\begin{center}
	{\bf Acknowledgement}
\end{center}
The author thanks two anonymous referees for suggestions that improve the paper. 
This research is supported by an NSERC Discovery Grant.

\newpage
\begin{center}
	{\bf Appendix}
\end{center}
\begin{algorithm}
	\caption{STABLE-SET-UNION(G)}\label{alg:union}
	\begin{algorithmic}
		\STATE{\textbf{input:}} a disconnected graph $G$ with a weight function $w$ on its vertices.
		\STATE{\textbf{output:}} a maximum weighted stable set of $G$  or a message that $G$ is not (banner, odd hole)-free 
		\STATE
		
		\STATE compute the components $C_1, C_2, \ldots C_t$ of $G$
		\FOR{each component $C_i$, $i=1,2,\ldots,t$}
		\STATE call  WSS($G[C_i]$)
		\IF{a maximum weighted stable set $S_i$ of $G[C_i]$ is {\bf not} returned}
		\STATE return ``$G$ is not (banner, odd hole)-free'' and stop
		\ENDIF
		\ENDFOR
		\STATE let $S = S_1 \cup \ldots \cup S_t$ where $S_i$ is the stable set returned by WWS($G[C_i]$)
		\STATE return $S$ and stop
	\end{algorithmic}
\end{algorithm}

\begin{algorithm}
	\caption{STABLE-SET-JOIN(G)}\label{alg:join}
	\begin{algorithmic}
		\STATE{\textbf{input:}} a graph $G$ such that $\overline{G}$ is disconnected, with a weight function $w$ on its vertices.
		\STATE{\textbf{output:}} a maximum weighted stable set of $G$  or a message that $G$ is not (banner, odd hole)-free 
		\STATE
		\STATE compute the components $C_1, C_2, \ldots C_t$ of $\overline{G}$
		\STATE let $S=\emptyset$, $w(S) = 0$ 
		\FOR{each component $C_i$, $i=1,2,\ldots,t$}
		\STATE call  WSS($G[V(C_i)]$)
		\IF{a maximum weighted stable set $S_i$ of $G[V(C_i)]$ is not returned}
		\STATE return ``$G$ is not (banner, odd hole)-free'' and stop
		\ENDIF
		\IF {$w(S_i) > w(S)$ where $S_i$ is the stable set returned by WWS($G[C_i]$)  }
		\STATE set $S= S_i$, $w(S) = w(S_i)$
		\ENDIF
		\ENDFOR
		\STATE return $S$ and stop
		
	\end{algorithmic}
\end{algorithm}


\begin{thebibliography}{99}

\bibitem{Ber1961}
C. Berge, F\"arbung von Graphen, deren s\"amtliche bzw. deren
ungerade Kreise starr sind, {\it Wiss. Z. Martin-Luther-Univ.
Halle-Wittenberg Math.-Natur. Reihe} 10:114 (1961)  88.


\bibitem{BerChv1984}
C. Berge and V. Chv\'{a}tal (eds.), Topics on perfect graphs.
North-Holland, Amsterdam, 1984.

\bibitem{ChuCor2005}
M. Chudnovsky, G. Cornu\'{e}jols, X. Liu, P. Seymour and K.
Vu\u{s}kovi\'c, Recognizing Berge graphs, {\it Combinatorica} 25
(2005),  143--186.

\bibitem{ChuRob2006}
M. Chudnovsky, N. Robertson, P. Seymour, and R. Thomas, The strong
perfect graph theorem, {\it Annals of Mathematics} 164 (2006), pp.
51--229.

\bibitem{ChuRob2010}
M. Chudnovsky, N. Robertson, P. Seymour, and R. Thomas, K4-free
graphs with no odd holes,  {\it Journal of Combinatorial Theory,
Series B}  100 (2010),  313-–331.


\bibitem{ChvSbi1998}
V. Ch\'vatal and N. Sbihi,  Recognizing claw-free perfect graphs,
{\it Journal of Combinatorial Theory Ser.B} 44 (1988), 154--176.

\bibitem{ConCor2004}
M. Conforti, G. Cornu\'ejols, A. Kapoor and K. Vu\v{s}kov\'ic,
Decomposition of odd-hole-free graphs by double star cutsets and
2-joins,  {\it Discrete Applied Mathematics} 141 (2004), 41--91.

\bibitem{Cun1982}
W. Cunningham, Decompositions of directed graphs, {\it SIAM Journal on Algebraic and Discrete Methods} 3:2 (1982), 214--228.

\bibitem{Gal1967}
T. Gallai, Transitiv orientierbare Graphen, {\it Acta Math. Acad.
Sci. Hungar} 18 (1967), 25--66.


\bibitem{GerHer2004}
M. U. Gerber and A. Hertz and V. V. Lozin, Stable sets in two
subclasses of banner-free graphs, {\it Discrete Applied
Mathematics} 132 (2004), 121--136.

\bibitem{Gol1980}
M. C. Golumbic, Algorithmic graph theory and perfect graphs,
Academic Press, New York, 1980.

\bibitem{GroLov1984}
M. Gr\"otschel, L. Lov\'{a}sz and A. Schrijver, Polynomial
algorithms for perfect graphs, in \cite{BerChv1984}.


\bibitem{HoaMcd1999}
C. T. Ho\`ang and C. McDiarmid, A note on the divisibility of
graphs, in: Congressus Numerantium 136, Proceedings of the
Thirtieth Southeastern International Conference on Combinatorics,
Graph Theory, and Computing (1999), 215-219.

\bibitem{HoaMcd2002}
C. T. Ho\`ang and C. McDiarmid, On the divisibility of graphs,
{\it Discrete Mathematics} 242 (2002), 145--156.


\bibitem{HoaRee1989}
C. T. Ho{\`a}ng, and B. A. Reed, Some classes of perfectly
orderable graphs, {\it Journal of Graph Theory} 13 (1989),
445--463.

\bibitem{JenTof1995}
T. R. Jensen and B. Toft, Graph coloring problems,
Wiley-Interscience, New York, 1995.

\bibitem{KraKra2001}
D. Kr\'al, J. Kratochv\'il,  Zs. Tuza, G.J. Woeginger, Complexity
of Coloring Graphs without Forbidden Induced Subgraphs, WG '01,
{\it Lecture Notes in Computer Science}, Vol. 2204, Springer,
Berlin, 2001, pp. 254–262.

\bibitem{Lov1972}
L. Lov\'asz, Normal hypergraphs and the perfect graph conjecture,
{\it Discrete Mathematics} 2 (1972) 253--267.

\bibitem{MafPri1996}
F. Maffray, M. Preissmann, On the NP-completeness of the k-colorability problem for triangle-free
graphs, {\it Discrete Mathematics} 162 (1996), 313--317.

\bibitem{MccSpi1999}
R. M. McConnell and J. P. Spinrad, Modular decomposition and
transitive orientation, {\it Discrete Mathematics} 201 (1999),
189--241.

\bibitem{MicVaz1980}
S. Micali and V. V. Vazirani, An ${O}$ ($\sqrt{|V|} |{E}|$)
algorithm for finding maximum matching in general graphs, 21st
Annual Symposium on Foundations of Computer Science (1980),
17--27.

\bibitem{Pol1974}
S. Poljak, A note on stable sets and colorings of graphs, {\it
Commentationes Mathematicae Universitatis Carolinae} 15 (1974),
307--309.

\bibitem{RagSpi2003}
V. Raghavan and J. Spinrad, Robust algorithms for restricted
domains,  {\it Journal of Algorithms} 48 (2003),  160--172.

\bibitem{Rao2008}
R. Rao, Solving NP-complete problems using split decomposition, {\it Discrete Applied Mathematics} 156:14 (2008) 2768--2780.

\bibitem{ScoSey2015}
A. Scott and P. Seymour, Induced subgraphs of graphs with large
chromatic number {I}. {O}dd holes, http://arxiv.org/abs/1410.4118
(2015).

\bibitem{Tar1985}
R. E. Tarjan, Decomposition by clique separators. {\it Discrete
Math} 55 (1985), pp. 221--232.



\bibitem{Whi1984}
S. H. Whitesides, A method for solving certain graph recognition
and  optimization problems, with applications to perfect graphs,
in \cite{BerChv1984}.

\end{thebibliography}
\end{document}